\newtheorem{thm}{Theorem}[section]
\newtheorem{pro}[thm]{Proposition}
\newtheorem{rem}[thm]{Remark}
\newtheorem{lem}[thm]{Lemma}
\newtheorem{core}[thm]{Corollary}
\newtheorem{example}{Example}[section]%
\def \leq {\leqslant}
  \def\tr{{\rm tr}}
\def\exp{{\sf exp}}
\let\oldproofname=\proofname
\renewcommand{\proofname}{\rm\bf{\oldproofname}}
\begin{document}
	
\title{Algebraic degrees of quasi-abelian semi-Cayley digraphs}
\author[a]{Shixin Wang}
\author[b]{Majid Arezoomand}
\author[a]{Tao Feng \thanks{Supported by NSFC under Grant 12271023}}
\affil[a]{School of Mathematics and Statistics, Beijing Jiaotong University, Beijing 100044, P. R. China}
\affil[b]{Department of Mathematics, University of Larestan, Lar, Iran}
\affil[ ]{sxwang@bjtu.edu.cn, arezoomand@lar.ac.ir, tfeng@bjtu.edu.cn}
	
\renewcommand*{\Affilfont}{\small\it}
\renewcommand\Authands{ and }

\date{}
\maketitle
	
\begin{abstract}
For a digraph $\Gamma$, if $F$ is the smallest field that contains all roots of the characteristic polynomial of the adjacency matrix of $\Gamma$, then $F$ is called the splitting field of $\Gamma$. The extension degree of $F$ over the field of rational numbers $\mathbb{Q}$ is said to be the algebraic degree of $\Gamma$. A digraph is a semi-Cayley digraph over a group $G$ if it admits $G$ as a semiregular automorphism group with two orbits of equal size. A semi-Cayley digraph $\mathrm{SC}(G,T_{11},T_{22},T_{12},T_{21})$ is called quasi-abelian if each of $T_{11},T_{22},T_{12}$ and $T_{21}$ is a union of some conjugacy classes of $G$. This paper determines the splitting field and the algebraic degree of a quasi-abelian semi-Cayley digraph over any finite group in terms of irreducible characters of groups. This work generalizes the previous works on algebraic degrees of Cayley graphs over abelian groups and any group having a subgroup of index 2, and semi-Cayley digraphs over abelian groups.
\end{abstract}	

\noindent {\bf Keywords}: quasi-abelian semi-Cayley digraph, bi-Cayley graph, algebraic degree, integral graph.

%As a corollary, a sufficient and necessary condition for a quasi-abelian semi-Cayley digraph to be integral is given.

\section{Introduction}

A graph is {\em integral} if all the eigenvalues of its adjacency matrix are integers. Harary and Schwenk \cite{Which Graphs Have Integral Spectra} introduced the concept of integral graphs in 1974. Since then, the classification and constructions for integral graphs have become an important research topic in algebraic graph theory. Integral graphs are very rare \cite{Graphs with integral spectrum} and in general, it is difficult to classify them.

Since the eigenvalues of graphs are all algebraic integers, M\"{o}nius, Steuding and Stumpf \cite{Which graphs have nonintegral spectra} proposed the concept of algebraic degrees of graphs in 2018. Given a graph $\Gamma$, if the field $K$ is the smallest extension of the field of rational numbers $\mathbb{Q}$ containing all the eigenvalues of $\Gamma$, then $K$ is said to be the {\em splitting field} of $\Gamma$, denoted by $\mathrm{SF}(\Gamma)$. The {\em algebraic degree} of $\Gamma$ is defined as the extension degree of $\mathrm{SF}(\Gamma)$ over $\mathbb{Q}$, i.e., $[\mathrm{SF}(\Gamma):\mathbb{Q}]$, denoted by $\mathrm{deg}(\Gamma)$. A graph is integral if and only if its algebraic degree is one.

It is shown in \cite{Which graphs have nonintegral spectra} that a graph with large diameter must have large algebraic degree. M\"{o}nius \cite{The algebraic degree of spectra of circulant graphs,Splitting fields of spectra of circulant graphs} determined the splitting field and the algebraic degree of any circulant graph. Much more work has been done, very recently, on determining algebraic degrees of mixed Cayley graphs over abelian groups \cite{Splitting fields of mixed cayley graphs over abelian groups}, Cayley hypergraphs \cite{Algebraic degree of spectra of Cayley hypergraphs}, Cayley graphs over abelian groups and dihedral groups \cite{Algebraic degree of Cayley graphs over abelian groups and dihedral groups}, and semi-Cayley digraphs over abelian groups \cite{Algebraic degrees of 2-Cayley digraphs over abelian groups}.

A {\em Cayley digraph} $\mathrm{Cay}(G,S)$ over a finite group $G$ with respect to $S$ is a digraph with vertex set $G$ and edge set $\{(x,y)\mid x,y\in G, yx^{-1}\in S\}$. Semi-Cayley graphs as a generalization of Cayley graphs were first proposed by de Resmini and Jungnickel \cite{Strongly regular semi-Cayley graphs} and also known as bi-Cayley graphs (cf. \cite{zf}) or 2-Cayley graphs (cf. \cite{On the characteristic polynomial of n-Cayley digraphs}).
A digraph is said to be a {\em semi-Cayley digraph} over a finite group $G$ if it admits $G$ as a semiregular automorphism group with two orbits of equal size.
Specifically, for four subsets $T_{ij}$, $i,j\in\{1,2\}$, of a finite group $G$, a semi-Cayley digraph $\mathrm{SC}(G,T_{11},T_{22},T_{12},T_{21})$ is a digraph with vertex set the union of the right part $G_1=\{g_1\mid g\in G\}$ and the left part $G_2=\{g_2\mid g\in G\}$, and the arc set the union of $\{(h_1,g_1)\mid gh^{-1}\in T_{11}\}$,  $\{(h_2,g_2)\mid gh^{-1}\in T_{22}\}$, $\{(h_1,g_2)\mid gh^{-1}\in T_{12}\}$ and $\{(h_2,g_1)\mid gh^{-1}\in T_{21}\}$.
Note that $\mathrm{SC}(G,T_{11},T_{22},T_{12},T_{21})$ is undirected if and only if $T_{11}=T_{11}^{-1},T_{22}=T_{22}^{-1}$ and $T_{12}^{-1}=T_{21}$.

% Furthermore, $\mathrm{SC}(G,T_{11}, T_{22}, T_{12},T_{21})$ has no loop if and only if $T_{11}$ and $T_{22}$ does not contain the identity element of $G$.

This paper is devoted to determining the splitting fields and algebraic degrees of quasi-abelian semi-Cayley digraphs. A Cayley digraph $\mathrm{Cay}(G,S)$ is {\em quasi-abelian} if $S$ is a conjugate-closed subset of $G$ (that is, if $s\in S$ then $g^{-1}sg\in S$ for all $g\in G$), or equivalently a union of conjugacy classes of $G$ (cf. \cite{Quasi-Abelian Cayley graphs and Parsons graphs}). Similarly, a semi-Cayley digraph  $\mathrm{SC}(G,T_{11},T_{22},T_{12},T_{21})$ is said to be {\em quasi-abelian} if each of $T_{11},T_{22},T_{12}$ and $T_{21}$ is a union of conjugacy classes of $G$ (cf. \cite{Majid2}). A semi-Cayley digraph over an abelian group is a quasi-abelian semi-Cayley digraph, but not vice versa.

The paper is organized as follows. Section \ref{pre} gives some basic facts on group representations and field extensions. Section \ref{sec:3.1} provides explicit expressions for eigenvalues of quasi-abelian semi-Cayley digraphs. Section \ref{sec:3.2} determines the splitting fields and algebraic degrees of quasi-abelian semi-Cayley digraphs (see Theorem \ref{main}). Examples of non-abelian and non-Cayley semi-Cayley graphs are provided in Section \ref{sec:3.3} to illustrate the use of Theorem \ref{main}. Concluding remarks are given in Section \ref{sec:conluding}.

% this paper generalizes the work of \cite{Algebraic degrees of 2-Cayley digraphs over abelian groups} and

\section{Preliminaries}\label{pre}

All groups considered in this paper are finite and we use the multiplicative notation for the group operation. Sets and multisets will be denoted by curly braces $\{~\}$ and square brackets $[~]$, respectively. Every union will be understood as multiset union with multiplicities of elements preserved.

For multisets $X,Y\subseteq G$ and a positive integer $z$, we define the multisets $X^z:=[ x^z\mid x\in X]$, $XY:=[xy\mid x\in X, y\in Y]$ and $XY:=\emptyset$ if either $X$ or $Y$ is an empty set.
For a positive integer $k$ and a multiset $X$, $k*X$ is a multiset in which each element of $X$ appears $k$ times. For two multisets $X=\bigcup_{x\in G}a_x*\{x\}$ and $Y=\bigcup_{x\in G}b_x*\{x\}$, where $a_x$ and $b_x$ are positive integers for each $x\in G$, we define
$X\setminus Y:=\bigcup_{a_x>b_x,x\in G}(a_x-b_x)*\{x\}$ and
$Y\setminus X:=\bigcup_{a_x<b_x,x\in G}(b_x-a_x)*\{x\}$.
Clearly, $(X\setminus Y)\cap(Y\setminus X)=\emptyset$.

The reader is referred to \cite{Huppert} for representation theory of finite groups and properties of characters. Here we recall only a few basic definitions. Let $G$ be a finite group. A {\em representation of $G$ of degree $d$} is a group homomorphism $\rho:G\rightarrow \mathrm{GL}(V)$ for some  vector space $V$ of dimension $d$, where $\mathrm{GL}(V)$ is the group of all invertible linear transformations on $V$. Usually, we write $\rho_g$ instead of $\rho(g)$ and $\rho_g(v)$ is the value of $\rho_g$ on $v\in V$. Two representations $\varphi:G\rightarrow \mathrm{GL}(V)$ and $\psi:G\rightarrow \mathrm{GL}(W)$ are said to be {\em equivalent} if there exists an isomorphism $\tau:V\rightarrow W$ such that $\psi_g= \tau\varphi_g\tau^{-1}$ for all $g\in G$. For a representation $\rho:G\rightarrow \mathrm{GL}(V)$, a subspace $W\leq  V$ is said to be {\em $G$-invariant} if for all $g\in G$ and $w\in W$, one has $\rho_g(w)\in W$. The representation $\rho$ is said to be {\em irreducible} if the only $G$-invariant subspaces of $V$ are $\{0\}$ and $V$ itself. For a representation $\rho:G\rightarrow \mathrm{GL}(V)$, the {\em character} $\chi_\rho:G\rightarrow \mathbb{C}$ of $\rho$ is defined by $\chi_\rho(g)=\mathrm{Tr} (\rho_g)$, where ``$\mathrm{Tr}$'' is the trace function. For a character $\chi$ of $G$ the value $\chi(1)$ is called {\em the degree} of $\chi$ and is denoted by $d_\chi$. A character of an irreducible representation is called an {\em irreducible character}.

Let $f$ be an irreducible representation or a character of a group $G$ and $A$ is a multiset of $G$.
We define $f(A):=\sum_{a\in A}f(a)$. Denote by $\mathrm{Irr}(G)$ the set of all irreducible characters of $G$. We use the convention that $\chi(\emptyset)=0$ for any $\chi\in \mathrm{Irr}(G)$.

\begin{lem}\label{class-function}
Let $\chi\in\mathrm{Irr}(G)$ and $\rho$ be an affording irreducible representation of $\chi$. If $A$ and $B$ are two conjugate-closed subsets of $G$, then
\begin{center}
\begin{tabular}{lll}
$(1)$ $\rho(A)=\frac{\chi(A)}{d_\chi}I_{d_\chi},$ & $(2)$ $\rho(AB)=\frac{\chi(A)\chi(B)}{d_\chi^2}I_{d_\chi},$ & $(3)$ $\chi(AB)=\frac{\chi(A)\chi(B)}{d_\chi}.$
\end{tabular}
\end{center}
\end{lem}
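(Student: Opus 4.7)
The plan is to derive all three statements from Schur's lemma applied to the operator $\rho(A)=\sum_{a\in A}\rho(a)$, exploiting the fact that a conjugate-closed subset yields a central element of the group algebra under any representation.

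First I would prove $(1)$. Since $A$ is a union of conjugacy classes, for every $g\in G$ the conjugation action sends $A$ to itself as a multiset, hence
\[
\rho(g)\,\rho(A)\,\rho(g)^{-1}=\sum_{a\in A}\rho(gag^{-1})=\sum_{a'\in gAg^{-1}}\rho(a')=\rho(A).
\]
Thus $\rho(A)$ commutes with $\rho(g)$ for every $g\in G$. Because $\rho$ is irreducible, Schur's lemma forces $\rho(A)=\lambda I_{d_\chi}$ for some $\lambda\in\mathbb{C}$. Taking traces gives $\chi(A)=\mathrm{Tr}(\rho(A))=\lambda d_\chi$, so $\lambda=\chi(A)/d_\chi$, establishing $(1)$.

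For $(2)$ I would just multiply: expanding the product as a double sum and using that $\rho$ is a homomorphism,
\[
\rho(A)\rho(B)=\sum_{a\in A}\sum_{b\in B}\rho(a)\rho(b)=\sum_{a\in A}\sum_{b\in B}\rho(ab)=\sum_{c\in AB}\rho(c)=\rho(AB),
\]
where the key bookkeeping point is that $AB$ is the multiset $[ab\mid a\in A,\,b\in B]$, matching the multiplicities coming from the double sum (this is the place to be careful, since confusing set and multiset union would break the identity). Applying $(1)$ to both factors on the left yields $\rho(AB)=\tfrac{\chi(A)\chi(B)}{d_\chi^2}I_{d_\chi}$. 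Then $(3)$ follows by taking the trace of $(2)$, since $\mathrm{Tr}(I_{d_\chi})=d_\chi$.

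The main obstacle, such as it is, is simply the multiset bookkeeping in the product $AB$, together with observing that the product of two conjugate-closed multisets is again conjugate-closed (which follows automatically from $(g A g^{-1})(gBg^{-1})=g(AB)g^{-1}$), so that no consistency issue arises when $(1)$ is later invoked on $AB$ directly. Everything else is a routine application of Schur's lemma and the definition of the character as the trace of the representation.
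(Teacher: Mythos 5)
Your proof is correct and follows essentially the same route as the paper: establish that $\rho(A)$ commutes with every $\rho(g)$ via conjugate-closedness, apply Schur's lemma (the paper cites the corresponding result in Huppert) to conclude $\rho(A)$ is scalar, identify the scalar by taking traces, and then derive $(2)$ and $(3)$ from the homomorphism property and the trace. Your extra remarks on the multiset bookkeeping for $AB$ are a welcome clarification but do not change the argument.
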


\begin{proof} For every $g\in G$, since $\rho$ is a group homomorphism and $gAg^{-1}=A$, we have
\begin{align*}
\rho(g)\rho(A)\rho(g)^{-1}=\rho(g)\Big(\sum_{a\in A} \rho(a)\Big )\rho(g^{-1})
	=\sum_{a\in A}\rho(gag^{-1})
	=\sum_{a'\in A}\rho(a')=\rho(A).
\end{align*}
Then $(1)$ follows immediately from \cite[Theorem 2.3(b)]{Huppert}. Since $\rho$ is a group homomorphism, we have $\rho(A)\rho(B)=\rho(AB)$, and so $(2)$ is a direct consequence of $(1)$. Furthermore, $\tr(\rho(AB))=\chi(AB)$, which proves $(3)$.
\end{proof}

For a finite group $G$, the group ring $\mathbb{C}[G]$ is a ring, consisting of all the formal sums of the free basis $\{g\mid g\in G\}$ over the field of complex numbers $\mathbb{C}$, with the multiplication extending that of $G$ by linearity and distributivity. For any $\sum_{g\in G}a_gg\in \mathbb{C}[G]$ and $\chi \in \mathrm{Irr}(G)$, we define $\chi(\sum_{g\in G}a_gg):=\sum_{g\in G}a_g\chi(g)$.
The following lemma plays a significant role in the proof of Theorem \ref{main} in Section \ref{sec:main}.

\begin{lem}\label{a=b}
Let $A$ and $B$ be conjugate-closed multisets of a finite group $G$. Then $A=B$ if and only if $\chi(A)=\chi(B)$ for every $\chi\in\mathrm{Irr}(G)$.
\end{lem}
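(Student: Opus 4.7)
The forward direction is immediate from the definition $\chi(X)=\sum_{x\in X}\chi(x)$, so the plan is to concentrate on the reverse direction: assuming $\chi(A)=\chi(B)$ for every $\chi\in\mathrm{Irr}(G)$, deduce $A=B$ as multisets. My strategy is to express both $A$ and $B$ as non-negative integer combinations of the class sums and then exploit the invertibility of the character table.

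First, let $C_1,\dots,C_k$ be the conjugacy classes of $G$, and for each $i$ let $\widehat{C}_i:=\bigcup_{x\in C_i}\{x\}$. Because $A$ is a conjugate-closed multiset, conjugation by any $g\in G$ permutes the copies of each element, so the multiplicity of $x$ in $A$ depends only on the class of $x$. Hence there exist non-negative integers $a_1,\dots,a_k$ with $A=\bigcup_{i=1}^k a_i*\widehat{C}_i$, and similarly $B=\bigcup_{i=1}^k b_i*\widehat{C}_i$ for non-negative integers $b_i$. The goal becomes showing $a_i=b_i$ for every $i$.

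Next, I evaluate characters on these decompositions. Since every $\chi\in\mathrm{Irr}(G)$ is constant on each $C_i$, choosing representatives $c_i\in C_i$ yields $\chi(\widehat{C}_i)=|C_i|\chi(c_i)$, and therefore the hypothesis $\chi(A)=\chi(B)$ rewrites as
\[
\sum_{i=1}^{k}(a_i-b_i)\,|C_i|\,\chi(c_i)=0\qquad\text{for every }\chi\in\mathrm{Irr}(G).
\]
Setting $x_i:=(a_i-b_i)|C_i|$, this is a homogeneous linear system $M\mathbf{x}=\mathbf{0}$, where $M$ is the character table of $G$ with rows indexed by $\mathrm{Irr}(G)$ and columns indexed by the conjugacy classes, i.e.\ $M_{\chi,i}=\chi(c_i)$.

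Finally, I invoke the classical fact that the character table $M$ is a non-singular $k\times k$ matrix (this is equivalent to the column orthogonality relations $\sum_{\chi}\chi(c_i)\overline{\chi(c_j)}=\delta_{ij}|G|/|C_i|$). Non-singularity forces $\mathbf{x}=\mathbf{0}$, i.e.\ $(a_i-b_i)|C_i|=0$ and hence $a_i=b_i$ for each $i$, giving $A=B$. There is really no obstacle in this argument; the only subtle point is the very first reduction, namely checking that a conjugate-closed multiset is automatically constant on each conjugacy class, which is the part worth stating cleanly before invoking the character-table inversion.
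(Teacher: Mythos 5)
Your proof is correct, and it reaches the conclusion by a slightly different route than the paper. The paper works inside the group algebra: it observes that $\sum_{a\in A}a$ and $\sum_{b\in B}b$ lie in $Z(\mathbb{C}[G])$, expands them in the basis of primitive central idempotents $e_\chi=\frac{\chi(1)}{|G|}\sum_{x}\overline{\chi(x)}x$ (citing that these form a $\mathbb{C}$-basis of the center), and uses the \emph{row} orthogonality relations to show $\eta(e_\chi)=\delta_{\eta\chi}\chi(1)$, so that the coefficient of $e_\chi$ is $\chi(\mathcal{Y})/\chi(1)$ and is therefore determined by the character values. You instead expand $A$ and $B$ in the basis of conjugacy-class sums and invoke nonsingularity of the character table, i.e.\ the \emph{column} orthogonality relations. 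The two arguments are dual to one another --- the class sums and the $e_\chi$ are two bases of $Z(\mathbb{C}[G])$ related by (essentially) the character table --- so the mathematical content is the same, but yours is marginally more self-contained in that it only needs the character table to be a square invertible matrix rather than the structure theory of central idempotents. One detail you were right to flag explicitly, and which the paper passes over in the phrase ``are both in $Z(\mathbb{C}[G])$'', is that a conjugate-closed \emph{multiset} has multiplicity function constant on conjugacy classes; that is exactly what makes the decomposition $A=\bigcup_i a_i*C_i$ (and membership in the center) legitimate.
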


\begin{proof}
For $\chi\in\mathrm{Irr}(G)$, let
\begin{equation}\label{eqn:1}
e_\chi=\frac{\chi(1)}{|G|}\sum\limits_{x\in G}\overline{\chi(x)}x\in \mathbb{C}[G].
\end{equation}
Then by \cite[Theorem 4.5]{Huppert}, $\{e_\chi\mid \chi\in\mathrm{Irr}(G)\}$ is a $\mathbb{C}$-basis of $Z(\mathbb{C}[G])$, the center of $\mathbb{C}[G]$. For any $\mathcal{Y}\in Z(\mathbb{C}[G])$, we can write uniquely \begin{equation}\label{eqn:2}
\mathcal{Y}=\sum_{\chi\in\mathrm{Irr}(G)}a_\chi(\mathcal{Y}) e_\chi,
\end{equation}
where $a_\chi(\mathcal{Y})\in \mathbb{C}$, and so $\eta(\mathcal{Y})=\sum_{\chi\in\mathrm{Irr}(G)}a_\chi(\mathcal{Y}) \eta(e_\chi)$ for any $\eta\in \mathrm{Irr}(G)$.
By \eqref{eqn:1}, using the orthogonality relations of characters, we have
\begin{equation*}
	\eta(e_\chi)=\frac{\chi(1)}{|G|}\sum\limits_{x\in G}\overline{\chi(x)}\eta(x)=\left\{
	\begin{array}{ll}
		\chi(1),&\eta=\chi\\
		0,&\eta\neq \chi.
	\end{array}
	\right.
\end{equation*}
Thus $\eta(\mathcal{Y})=\sum_{\chi\in\mathrm{Irr}(G)}a_\chi(\mathcal{Y}) \eta(e_\chi)=a_\eta(\mathcal{Y}) \chi(1)=a_\eta(\mathcal{Y}) \eta(1)$ and so $a_\eta(\mathcal{Y})=\frac{\eta(\mathcal{Y})}{\eta(1)}$.

If $A=B$, then $\chi(A)=\chi(B)$ for all $\chi\in\mathrm{Irr}(G)$. Conversely, suppose that $\chi(A)=\chi(B)$ for all $\chi\in\mathrm{Irr}(G)$. Since $A$ and $B$ are conjugate-closed multisets of $G$, we have $\Sigma_{a\in A} a$ and $\Sigma_{b\in B} b$ are both in $Z(\mathbb{C}[G])$. It follows that  $a_\chi(A)=\frac{\chi(A)}{\chi(1)}=\frac{\chi(B)}{\chi(1)}=a_\chi(B)$ for all $\chi\in\mathrm{Irr}(G)$, which implies $A=B$ by \eqref{eqn:2}.
\end{proof}
Let $K$ be a field and $F$ be an extension field of $K$, denoted by $F/K$. The {\em degree} of the field extension $F/K$, written as $[F:K]$, is the dimension of $F$ as a vector space over $K$. The extension $F/K$ is {\em finite} if $[F:K]$ is finite and is {\em infinite} otherwise. By $\mathrm{Gal}(F/K)$, {\em the Galois group} of $F$ over $K$, we mean the set of all automorphisms of $F$ that take every element of $K$ to itself. If $F/K$ is a finite extension and the set of all elements fixed by $\mathrm{Gal}(F/K)$ in $F$ is exactly $K$, then $F/K$ is a {\em Galois extension}, and $|\mathrm{Gal}(F/K)|=[F:K]$. If $F/K$ is a finite Galois extension, then for any intermediate field $L$ such that $K\leq L\leq F$, $F/L$ is also a finite Galois extension. The reader is referred to \cite{Morandi} for Galois theory.

\begin{lem} {\rm \cite[Corollary 7.8]{Morandi}} \label{Galois}
Let $\omega_m$ be a primitive $m$-th root of unity and $\mathbb{Q}(\omega_m)$ be the cyclotomic field obtained by adjoining $\omega_m$ to $\mathbb{Q}$. Then $\mathbb{Q}(\omega_m)/\mathbb{Q}$ is a finite Galois extension. Furthermore, let $\sigma:\mathbb{Q}(\omega_m)\rightarrow \mathbb{Q}(\omega_m)$ be an element of $\mathrm{Gal}(\mathbb{Q}(\omega_m)/\mathbb{Q})$. Then $\sigma(\omega_m)=\omega_m^t$ for some $t\in\mathbb{Z}_m^*$, and
\begin{eqnarray*}
\eta:\mathrm{Gal}(\mathbb{Q}(\omega_m)/\mathbb{Q})&&\longrightarrow \mathbb{Z}_m^*,~~~~~~ \sigma \longmapsto t
\end{eqnarray*}
is a group isomorphism, where $\mathbb{Z}_m^*$ is the multiplicative group of the ring $\mathbb{Z}/m\mathbb{Z}$.
\end{lem}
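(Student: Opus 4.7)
The plan is to verify the three claims in order: first that $\mathbb{Q}(\omega_m)/\mathbb{Q}$ is a finite Galois extension, next that every element of its Galois group acts on $\omega_m$ by raising to some power $t\in\mathbb{Z}_m^*$, and finally that the resulting map $\eta$ is a group isomorphism.

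For finiteness and the Galois property, I would observe that $\omega_m$ is a root of $x^m-1\in\mathbb{Q}[x]$, so $\mathbb{Q}(\omega_m)$ is a finite algebraic extension of $\mathbb{Q}$. Since every $m$-th root of unity equals $\omega_m^k$ for some $k$, the field $\mathbb{Q}(\omega_m)$ is in fact the splitting field of $x^m-1$ over $\mathbb{Q}$, hence normal; separability is automatic in characteristic zero. Therefore $\mathbb{Q}(\omega_m)/\mathbb{Q}$ is a finite Galois extension and $|\mathrm{Gal}(\mathbb{Q}(\omega_m)/\mathbb{Q})|=[\mathbb{Q}(\omega_m):\mathbb{Q}]$.

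For the description of each $\sigma\in\mathrm{Gal}(\mathbb{Q}(\omega_m)/\mathbb{Q})$, I would use that $\sigma$ permutes the roots of the minimal polynomial of $\omega_m$, which divides $x^m-1$, so $\sigma(\omega_m)=\omega_m^t$ for some integer $t$. Since $\omega_m$ and $\sigma(\omega_m)$ generate the same cyclic group of order $m$ (as $\sigma$ is a field automorphism, hence preserves the multiplicative order), one has $\gcd(t,m)=1$, i.e.\ $t\in\mathbb{Z}_m^*$. The map $\eta$ is then well-defined modulo $m$, and it is a homomorphism because $(\sigma\tau)(\omega_m)=\sigma(\omega_m^{\eta(\tau)})=\omega_m^{\eta(\sigma)\eta(\tau)}$. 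Injectivity is immediate: if $\eta(\sigma)=1$ then $\sigma$ fixes the generator $\omega_m$ of $\mathbb{Q}(\omega_m)$, so $\sigma$ is the identity.

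The main obstacle, as usual, is surjectivity of $\eta$ onto $\mathbb{Z}_m^*$. Equivalently, one must show $[\mathbb{Q}(\omega_m):\mathbb{Q}]=\varphi(m)$, i.e.\ that the $m$-th cyclotomic polynomial $\Phi_m(x)=\prod_{t\in\mathbb{Z}_m^*}(x-\omega_m^t)$ is irreducible over $\mathbb{Q}$. My plan for this step is the classical argument: let $f(x)\in\mathbb{Z}[x]$ be the minimal polynomial of $\omega_m$ over $\mathbb{Q}$ (monic, a factor of $x^m-1$ by Gauss's lemma), and it suffices to prove that $\omega_m^p$ is also a root of $f$ for every prime $p$ with $p\nmid m$, since every $t\in\mathbb{Z}_m^*$ is a product of such primes. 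Assuming for contradiction that $f(\omega_m^p)\neq 0$, write $x^m-1=f(x)g(x)$ with $g(\omega_m^p)=0$; then $\omega_m$ is a root of $g(x^p)$, so $f(x)\mid g(x^p)$ in $\mathbb{Z}[x]$. Reducing modulo $p$ and using the Frobenius identity $g(x^p)\equiv g(x)^p\pmod p$ forces $\bar f$ and $\bar g$ to share a root in $\overline{\mathbb{F}_p}$, which makes $x^m-1$ have a repeated root modulo $p$, contradicting $p\nmid m$ (its derivative $mx^{m-1}$ is nonzero and coprime to $x^m-1$ modulo $p$). This yields the required irreducibility, so $|\mathrm{Gal}(\mathbb{Q}(\omega_m)/\mathbb{Q})|=\varphi(m)=|\mathbb{Z}_m^*|$, and the injection $\eta$ must be a bijection.
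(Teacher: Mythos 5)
Your proof is correct and complete; note that the paper does not prove this lemma at all but simply cites it as \cite[Corollary 7.8]{Morandi}. Your argument — splitting field of $x^m-1$ gives the Galois property, order-preservation gives $t\in\mathbb{Z}_m^*$, injectivity is immediate, and surjectivity reduces to the irreducibility of $\Phi_m$ via the classical Frobenius/repeated-root argument modulo primes $p\nmid m$ — is exactly the standard textbook proof of the cited result.
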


For a positive integer $n$ and a field $K$, let $K^{\times}:=K\setminus \{0\}$ and $K^{\times n}=\{k^n\mid k\in K^\times \}$ be the subgroup of $K^{\times}$. The image of element $k\in K^{\times}$ under the natural epimorphism from $K^{\times}$ onto the quotient group $K^{\times}/K^{\times n}$ is denoted by $[k]_{K^{\times n}}$. When $n=2$, $[k]_{K^{\times 2}}$ is simply written as $[k]_K$.

\begin{lem} {\rm \cite[Theorem 11.4 and Proposition 11.10]{Morandi}} \label{lem:KK}
Suppose $K$ is a field containing a primitive $n$-th root of unity and let $$F=K(\sqrt[n]{a_1},\ldots,\sqrt[n]{a_r}),$$
where $a_i\in K$ for $1\leq i\leq r$. Then $\mathrm{Gal}(F/K)$ is isomorphic to the subgroup of $K^{\times}/K^{\times n}$ generated by $[a_1]_{K^{\times n}},\ldots,[a_r]_{K^{\times n}}$.
\end{lem}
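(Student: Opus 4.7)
The plan is to establish the conclusion by the standard Kummer-theoretic pairing argument: construct a non-degenerate bilinear pairing between $\mathrm{Gal}(F/K)$ and the subgroup $H := \langle [a_1]_{K^{\times n}},\ldots,[a_r]_{K^{\times n}}\rangle \leq K^\times/K^{\times n}$ with values in the group $\mu_n \subseteq K^\times$ of $n$-th roots of unity, and then use the hypothesis $\mu_n \subseteq K$ together with Pontryagin-style duality to upgrade the non-degenerate pairing to an isomorphism $\mathrm{Gal}(F/K)\cong H$.

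First I would confirm that $F/K$ is a finite Galois extension. Writing $\omega_n$ for a primitive $n$-th root of unity, the polynomial $x^n - a_i$ splits in $F$ as $\prod_{j=0}^{n-1}(x-\omega_n^{\,j}\sqrt[n]{a_i})$ because $\omega_n\in K\subseteq F$, so $F$ is the splitting field over $K$ of the separable polynomial $\prod_{i=1}^{r}(x^n - a_i)$. Next I would define
\[
\langle\,\cdot\,,\,\cdot\,\rangle\colon \mathrm{Gal}(F/K)\times H \longrightarrow \mu_n,\qquad \bigl(\sigma,\,[a]_{K^{\times n}}\bigr)\longmapsto \frac{\sigma(\sqrt[n]{a})}{\sqrt[n]{a}},
\]
and check that it is well defined and bilinear. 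Membership in $\mu_n$ is immediate from $(\sigma(\sqrt[n]{a})/\sqrt[n]{a})^n = \sigma(a)/a = 1$. Independence of the chosen $n$-th root of $a$ follows because any two such roots differ by a factor in $\mu_n\subseteq K$, which cancels in the quotient; independence of the representative of $[a]_{K^{\times n}}$ follows because replacing $a$ by $ab^n$ with $b\in K^\times$ multiplies $\sqrt[n]{a}$ by $b$, and $\sigma(b)=b$. Bilinearity is a direct consequence of $\sigma$ being multiplicative.

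The main obstacle is the non-degeneracy, especially on the $H$-side. Left non-degeneracy is easy: if $\sigma(\sqrt[n]{a_i})=\sqrt[n]{a_i}$ for all $i$, then $\sigma$ fixes a generating set of $F$ over $K$, hence $\sigma=\mathrm{id}$. Right non-degeneracy is subtler and relies on the fundamental theorem of Galois theory: if $[a]_{K^{\times n}}\in H$ pairs trivially with every $\sigma\in\mathrm{Gal}(F/K)$, then $\sqrt[n]{a}$ lies in the fixed field of $\mathrm{Gal}(F/K)$, which equals $K$ because $F/K$ is Galois; therefore $a\in K^{\times n}$ and $[a]_{K^{\times n}}$ is trivial. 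Here one must make sure that, for an arbitrary $[a]_{K^{\times n}}\in H$ (not just the generators $[a_i]_{K^{\times n}}$), a representative $a$ can be chosen so that $\sqrt[n]{a}\in F$; this follows by writing $a$ as a product of powers of the $a_i$ modulo $K^{\times n}$ and taking the corresponding product of $\sqrt[n]{a_i}$'s as $\sqrt[n]{a}$.

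Finally I would convert non-degeneracy into the desired isomorphism. The pairing induces injective homomorphisms
\[
\mathrm{Gal}(F/K)\hookrightarrow \mathrm{Hom}(H,\mu_n)\qquad\text{and}\qquad H\hookrightarrow \mathrm{Hom}(\mathrm{Gal}(F/K),\mu_n).
\]
Since every element of $H$ has order dividing $n$ and $\mu_n$ is cyclic of order $n$, the finite abelian group $H$ satisfies $|\mathrm{Hom}(H,\mu_n)|=|H|$ (indeed, $\mathrm{Hom}(H,\mu_n)\cong H$ non-canonically), and similarly for $\mathrm{Gal}(F/K)$, which is finite abelian of exponent dividing $n$ because the pairing embeds it into $\mathrm{Hom}(H,\mu_n)$. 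Comparing cardinalities forces $|\mathrm{Gal}(F/K)|=|H|$, so both injections are bijections. Composing the injection $\mathrm{Gal}(F/K)\hookrightarrow \mathrm{Hom}(H,\mu_n)$ with a chosen isomorphism $\mathrm{Hom}(H,\mu_n)\cong H$ then yields the required isomorphism $\mathrm{Gal}(F/K)\cong H$, completing the proof.
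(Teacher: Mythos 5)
Your argument is correct and is essentially the standard Kummer-theoretic proof: the paper itself gives no proof of this lemma, quoting it from Morandi (Theorem 11.4 and Proposition 11.10), whose treatment of Kummer extensions rests on precisely the non-degenerate pairing $(\sigma,[a]_{K^{\times n}})\mapsto\sigma(\sqrt[n]{a})/\sqrt[n]{a}$ and the duality count that you carry out. The only cosmetic remark is that the $a_i$ must be taken in $K^{\times}$ (not merely in $K$) for the classes $[a_i]_{K^{\times n}}$ to be defined, but that is an imprecision in the statement as quoted, not a gap in your proof.
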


\section{Algebraic degrees of quasi-abelian semi-Cayley digraphs}\label{sec:main}

\subsection{Eigenvalues of quasi-abelian semi-Cayley digraphs}\label{sec:3.1}

The algebraic degree of a graph $\Gamma$ is the extension degree $[F:\mathbb{Q}]$ where $F$ is the smallest field containing all the eigenvalues of the adjacency matrix of $\Gamma$. Therefore, to determine the algebraic degree of a quasi-abelian semi-Cayley digraph, we need to know all eigenvalues of its adjacency matrix. The following lemma can be obtained by applying \cite[Theorem 3.1]{Majid2}. Its proof is similar to that of \cite[Corollary 3.2]{Majid2} which gives eigenvalues of the adjacency matrix of a quasi-abelian semi-Cayley undirected graph. We omit its proof here.

\begin{lem}\label{lem:eigenvalues-0}
Let $\Gamma=\mathrm{SC}(G,T_{11},T_{22},T_{12},T_{21})$ be a quasi-abelian semi-Cayley digraph. Then the eigenvalues of the adjacency matrix of $\Gamma$ are
\begin{equation*}\label{eqn:eigenvalues}				\lambda_\chi^{\pm}:=\frac{\chi(T_{11})+\chi(T_{22})\pm\sqrt{(\chi(T_{11})-\chi(T_{22}))^2+4\chi(T_{12})\chi(T_{21})}}{2d_\chi},
\end{equation*}
where $\chi\in \mathrm{Irr}(G)$ and the multiplicities of $\lambda_\chi^{+}$ and $\lambda_\chi^{-}$ are both $d_\chi^2$.
\end{lem}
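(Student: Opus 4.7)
The strategy is to diagonalize the adjacency matrix by decomposing the regular representation of $G$ into irreducibles and then reducing the spectral problem to a $2 \times 2$ scalar computation.

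First I would write the adjacency matrix of $\Gamma$ in block form
\[
A = \begin{pmatrix} M(T_{11}) & M(T_{12}) \\ M(T_{21}) & M(T_{22}) \end{pmatrix},
\]
where, for each $T \subseteq G$, $M(T)$ is the $|G| \times |G|$ matrix whose $(h,g)$-entry is $1$ if $gh^{-1} \in T$ and $0$ otherwise. The matrix $M(T)$ is the image of $\sum_{t \in T} t \in \mathbb{C}[G]$ under the left regular representation of $G$. Hence the spectrum of $A$ can be analyzed via the Wedderburn decomposition $\mathbb{C}[G] \cong \bigoplus_{\chi \in \mathrm{Irr}(G)} M_{d_\chi}(\mathbb{C})$ of the group algebra, with each simple component appearing as a submodule of the regular representation with multiplicity $d_\chi$.

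Next, fix $\chi \in \mathrm{Irr}(G)$ with affording irreducible representation $\rho$. Since $\Gamma$ is quasi-abelian, each $T_{ij}$ is conjugate-closed, so Lemma \ref{class-function}(1) yields $\rho(T_{ij}) = \frac{\chi(T_{ij})}{d_\chi} I_{d_\chi}$. Therefore, in the basis induced by the Wedderburn decomposition, the block of $A$ corresponding to the $\chi$-isotypic component is similar to
\[
B_\chi = \frac{1}{d_\chi}\begin{pmatrix} \chi(T_{11}) & \chi(T_{12}) \\ \chi(T_{21}) & \chi(T_{22}) \end{pmatrix} \otimes I_{d_\chi},
\]
repeated $d_\chi$ times (one copy for each occurrence of the $\chi$-irrep in the regular representation). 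The eigenvalues of the $2 \times 2$ scalar matrix on the left are exactly the roots of its characteristic polynomial
\[
x^2 - \frac{\chi(T_{11}) + \chi(T_{22})}{d_\chi} x + \frac{\chi(T_{11})\chi(T_{22}) - \chi(T_{12})\chi(T_{21})}{d_\chi^2} = 0,
\]
namely $\lambda_\chi^{\pm}$ as claimed, and the tensor with $I_{d_\chi}$ together with the $d_\chi$-fold replication of the block yields total multiplicity $d_\chi \cdot d_\chi = d_\chi^2$ for each of $\lambda_\chi^{\pm}$.

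Finally, a dimension check confirms completeness: summing multiplicities gives $\sum_{\chi \in \mathrm{Irr}(G)} 2 d_\chi^2 = 2|G|$, matching the size of $A$. The main subtlety is the bookkeeping of multiplicities, namely distinguishing the $d_\chi$ coming from $\rho(T_{ij})$ being a $d_\chi \times d_\chi$ scalar matrix from the $d_\chi$ coming from the multiplicity of $\chi$ in the regular representation; both contribute to the final $d_\chi^2$. Everything else reduces to block-matrix similarity transformations and the quadratic formula.
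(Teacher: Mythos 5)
Your argument is correct and is essentially the one the paper relies on: the authors omit the proof, deferring to \cite{Majid2}, whose Theorem 3.1 performs exactly this block decomposition of the $M(T_{ij})$ via the regular representation, after which Lemma \ref{class-function}(1) makes each $\rho(T_{ij})$ scalar and reduces everything to the $2\times 2$ characteristic polynomial and the multiplicity count $d_\chi\cdot d_\chi=d_\chi^2$. One pedantic point: with the paper's arc convention (the $(h,g)$-entry of $M(T)$ is nonzero iff $gh^{-1}\in T$), $M(T)$ is the transpose of the image of $\sum_{t\in T}t$ under the left regular representation rather than that image itself, but since each $\sum_{t\in T_{ij}}t$ is central in $\mathbb{C}[G]$ this is harmless --- each matrix coefficient $g\mapsto\rho(g)_{ij}$ is a simultaneous eigenvector of all four blocks with eigenvalue $\chi(T_{ij})/d_\chi$, which yields precisely the invariant $2$-planes and the multiplicities you claim.
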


In this section, we always assume that
\begin{equation}\label{eqn:I}
	\begin{array}{ll}
&I_1:=T_{11}\cup T_{22},\\
&I_2:=(T_{11}T_{11})\cup (T_{22}T_{22})\cup (4*T_{12}T_{21}),\\
&I_3:=2*T_{11}T_{22}.
	\end{array}
\end{equation}
They are all conjugate-closed multisets of $G$. Using $I_1$, $I_2$ and $I_3$, we rewrite the eigenvalues in Lemma \ref{lem:eigenvalues-0} as follows.

\begin{lem}\label{lem:eigenvalues}
Let $\Gamma=\mathrm{SC}(G,T_{11},T_{22},T_{12},T_{21})$ be a quasi-abelian semi-Cayley digraph. Then the eigenvalues of the adjacency matrix of $\Gamma$ are
$$\lambda_\chi^{\pm}=\frac{\chi(I_1)\pm\sqrt{d_\chi(\chi(I_2\setminus I_3)-\chi(I_3\setminus I_2))}}{2d_\chi},$$
where $\chi\in \mathrm{Irr}(G)$ and the multiplicities of $\lambda_\chi^{+}$ and $\lambda_\chi^{-}$ are both $d_\chi^2$.
\end{lem}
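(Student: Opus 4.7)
The plan is to start from the eigenvalue formula in Lemma \ref{lem:eigenvalues-0} and simply rewrite the expression under the square root using Lemma \ref{class-function}. The numerator $\chi(T_{11})+\chi(T_{22})$ is immediately equal to $\chi(I_1)$ by the linearity of $\chi$ on multisets and the definition $I_1=T_{11}\cup T_{22}$, so only the discriminant needs attention.

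For the discriminant, I would expand
$$(\chi(T_{11})-\chi(T_{22}))^2 + 4\chi(T_{12})\chi(T_{21}) = \chi(T_{11})^2 + \chi(T_{22})^2 - 2\chi(T_{11})\chi(T_{22}) + 4\chi(T_{12})\chi(T_{21}).$$
Because each $T_{ij}$ is conjugate-closed, Lemma \ref{class-function}(3) converts every product of character values into a character of a product: $\chi(T_{ii})^2 = d_\chi\,\chi(T_{ii}T_{ii})$, $\chi(T_{11})\chi(T_{22}) = d_\chi\,\chi(T_{11}T_{22})$, and $\chi(T_{12})\chi(T_{21}) = d_\chi\,\chi(T_{12}T_{21})$. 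Collecting gives
$$d_\chi\bigl[\chi(T_{11}T_{11})+\chi(T_{22}T_{22})+4\chi(T_{12}T_{21}) - 2\chi(T_{11}T_{22})\bigr] = d_\chi(\chi(I_2)-\chi(I_3)),$$
by the definitions of $I_2$ and $I_3$ in \eqref{eqn:I}.

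The final step is to turn $\chi(I_2)-\chi(I_3)$ into $\chi(I_2\setminus I_3)-\chi(I_3\setminus I_2)$. Writing $I_2=\bigcup_{x\in G}a_x*\{x\}$ and $I_3=\bigcup_{x\in G}b_x*\{x\}$, one has $\chi(I_2)-\chi(I_3)=\sum_{x\in G}(a_x-b_x)\chi(x)$, and splitting the sum according to the sign of $a_x-b_x$ gives exactly $\chi(I_2\setminus I_3)-\chi(I_3\setminus I_2)$ by the definition of multiset difference recalled in Section \ref{pre}. Substituting back into the eigenvalue formula yields the claimed expression, and the multiplicities $d_\chi^2$ are inherited verbatim from Lemma \ref{lem:eigenvalues-0}.

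The calculation is routine given the earlier preparation, so there is no real obstacle; the only point to be careful about is that Lemma \ref{class-function}(3) requires conjugate-closedness, which is exactly why the lemma is stated for \emph{quasi-abelian} semi-Cayley digraphs, and that the multiset-difference identity is applied to $I_2$ and $I_3$ (themselves conjugate-closed, being unions and disjoint sums of products of conjugate-closed sets), so the notation $\chi(I_2\setminus I_3)$ is well defined.
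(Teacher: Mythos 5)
Your proposal is correct and follows essentially the same route as the paper's own proof: rewrite the discriminant of Lemma \ref{lem:eigenvalues-0} by expanding the square, apply Lemma \ref{class-function}(3) to each product of character values (valid since each $T_{ij}$ is conjugate-closed), and identify the result with $d_\chi(\chi(I_2)-\chi(I_3))=d_\chi(\chi(I_2\setminus I_3)-\chi(I_3\setminus I_2))$. The only difference is that you spell out the multiset-difference step $\chi(I_2)-\chi(I_3)=\chi(I_2\setminus I_3)-\chi(I_3\setminus I_2)$, which the paper states without comment.
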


\begin{proof}
Clearly, $\chi(T_{11})+\chi(T_{22})=\chi(I_1)$.
Using Lemma \ref{class-function}(3), we have
\begin{align*}
&(\chi(T_{11})-\chi(T_{22}))^2+4\chi(T_{12})\chi(T_{21})\\
=&\chi(T_{11})\chi(T_{11})-2\chi(T_{11})\chi(T_{22})+\chi(T_{22})\chi(T_{22})+4\chi(T_{12})\chi(T_{21})\\
=&d_\chi\chi(T_{11}T_{11})-2d_\chi\chi(T_{11}T_{22})+d_\chi\chi(T_{22}T_{22})+4d_\chi\chi(T_{12}T_{21})\\
=&d_\chi\chi(I_2)-d_\chi\chi(I_3)=d_\chi(\chi(I_2\setminus I_3)-\chi(I_3\setminus I_2)).
\end{align*}
Then the desired conclusion follows immediately from Lemma \ref{lem:eigenvalues-0}.
\end{proof}

\subsection{Determination of algebraic degrees}\label{sec:3.2}

By Lemma \ref{lem:eigenvalues}, the splitting filed of $\Gamma$ is
\begin{equation}\label{eqn:splitting filed 0}
\begin{split}
\mathrm{SF}(\Gamma)&=\mathbb{Q}(\chi(I_1)\pm\sqrt{d_\chi(\chi(I_2\setminus I_3)-\chi(I_3\setminus I_2))}\mid \chi\in \mathrm{Irr}(G))\\
&=\mathbb{Q}(\chi(I_1),\sqrt{d_\chi(\chi(I_2\setminus I_3)-\chi(I_3\setminus I_2))}\mid \chi\in \mathrm{Irr}(G)).
\end{split}
\end{equation}
In order to determine the algebraic degree $[\mathrm{SF}(\Gamma):\mathbb{Q}]$ of $\Gamma$, we let
\begin{equation}\label{eqn:K0}
K=\mathbb{Q}(\chi(I_1),\chi(I_2\setminus I_3)-\chi(I_3\setminus I_2)\mid \chi\in \mathrm{Irr}(G)).
\end{equation}
It follows from \eqref{eqn:splitting filed 0} that
\begin{equation}\label{splitting field}
\mathrm{SF}(\Gamma)=K(\sqrt{d_\chi(\chi(I_2\setminus I_3)-\chi(I_3\setminus I_2))}\mid \chi\in \mathrm{Irr}(G)),
\end{equation}
and so
\begin{equation}\label{eqn:splitting field 1}
\mathrm{deg}(\Gamma)=[\mathrm{SF}(\Gamma):\mathbb{Q}]=[\mathrm{SF}(\Gamma):K][K:\mathbb{Q}].
\end{equation}

\begin{lem}\label{lem:F/K}
Let $M=\langle [d_\chi(\chi(I_2\setminus I_3)-\chi(I_3\setminus I_2))]_K\mid \chi\in \mathrm{Irr}(G)\rangle$. Then $[\mathrm{SF}(\Gamma):K]=|M|$.
\end{lem}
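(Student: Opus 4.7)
The plan is to recognize this as a straightforward application of Kummer theory (Lemma \ref{lem:KK}) with $n=2$. By \eqref{splitting field}, the extension $\mathrm{SF}(\Gamma)/K$ is obtained from $K$ by adjoining the square roots of the elements $a_\chi := d_\chi(\chi(I_2\setminus I_3)-\chi(I_3\setminus I_2))$ as $\chi$ runs over $\mathrm{Irr}(G)$. First I would check that each $a_\chi$ lies in $K$: the integer $d_\chi$ is in $\mathbb{Q}\subseteq K$, and $\chi(I_2\setminus I_3)-\chi(I_3\setminus I_2)$ belongs to $K$ by the very definition \eqref{eqn:K0} of $K$.

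Next I would verify the hypotheses of Lemma \ref{lem:KK}. Since $n=2$ and $-1\in\mathbb{Q}\subseteq K$, the field $K$ contains a primitive $n$-th root of unity, as required. I would also address the harmless case in which some $a_\chi=0$: then $\sqrt{a_\chi}=0\in K$, so such $\chi$ contribute nothing to the adjunction in \eqref{splitting field} and, correspondingly, their classes do not appear among the generators of $M$ (since $[\,\cdot\,]_K$ is only defined on $K^\times$). After removing these, $\mathrm{SF}(\Gamma)=K(\sqrt{a_{\chi_1}},\dots,\sqrt{a_{\chi_r}})$ with all $a_{\chi_i}\in K^\times$.

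Then Lemma \ref{lem:KK} applies directly and yields
\[
\mathrm{Gal}(\mathrm{SF}(\Gamma)/K)\;\cong\;\bigl\langle [a_{\chi_1}]_K,\dots,[a_{\chi_r}]_K\bigr\rangle\;=\;M.
\]
Because $\mathrm{SF}(\Gamma)/K$ is a Kummer extension over a field containing the relevant roots of unity, it is a finite Galois extension, so $|\mathrm{Gal}(\mathrm{SF}(\Gamma)/K)|=[\mathrm{SF}(\Gamma):K]$. Combining these identifications gives $[\mathrm{SF}(\Gamma):K]=|M|$, which is the claim.

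There is no real obstacle here; the lemma is essentially a translation of \eqref{splitting field} through Lemma \ref{lem:KK}. The only minor subtlety worth flagging is the convention for the zero values $a_\chi=0$: they must be excluded from the multiplicative setup defining $M$, but they also do not enlarge $\mathrm{SF}(\Gamma)$, so both sides of the identity are unaffected by discarding them. Everything else is formal.
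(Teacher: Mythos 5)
Your proof is correct and follows essentially the same route as the paper's: both apply Lemma \ref{lem:KK} with $n=2$ to identify $\mathrm{Gal}(\mathrm{SF}(\Gamma)/K)$ with $M$, and then use the fact that $\mathrm{SF}(\Gamma)/K$ is a finite Galois extension to conclude $[\mathrm{SF}(\Gamma):K]=|\mathrm{Gal}(\mathrm{SF}(\Gamma)/K)|=|M|$. Your extra care with the hypotheses --- that each $a_\chi\in K$, that $-1\in K$ is a primitive square root of unity, and that the classes with $a_\chi=0$ must be discarded from the generating set of $M$ (a convention the paper uses silently) --- is a welcome refinement; the only cosmetic difference is that the paper deduces Galois-ness of $\mathrm{SF}(\Gamma)/K$ by restricting the finite Galois extension $\mathrm{SF}(\Gamma)/\mathbb{Q}$ to the intermediate field $K$, whereas you invoke the Kummer structure directly.
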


\begin{proof}
It follows from Lemma \ref{lem:KK} (take $n=2$) that $\mathrm{Gal}(\mathrm{SF}(\Gamma)/K)$ is isomorphic to the group $M$. By Lemma \ref{Galois}, $\mathrm{SF}(\Gamma)/\mathbb{Q}$ is a finite Galois extension. Since $\mathbb{Q}\leq K\leq \mathrm{SF}(\Gamma)$, $\mathrm{SF}(\Gamma)/K$ is also a finite Galois extension. Therefore, $|\mathrm{Gal}(\mathrm{SF}(\Gamma)/K)|=[\mathrm{SF}(\Gamma):K]=|M|$.
\end{proof}

Next we determine $[K:\mathbb{Q}]$. Let $m$ be the {\em exponent} of a finite group $G$, i.e., the least common multiple of the orders of all elements of $G$. Let $\mathbb{Q}(\omega_m)$ be the cyclotomic field obtained by adjoining a primitive $m$-th root of unity $\omega_m$ to $\mathbb{Q}$. By \eqref{eqn:K0} we have $\mathbb{Q}\leq K\leq \mathbb{Q}(\omega_m)$, and so  $[K:\mathbb{Q}]=[\mathbb{Q}(\omega_m):\mathbb{Q}]/[\mathbb{Q}(\omega_m):K]$.
Since $[\mathbb{Q}(\omega_m):\mathbb{Q}]=|\mathrm{Gal}(\mathbb{Q}(\omega_m)/\mathbb{Q})|=|\mathbb{Z}_m^*|=\varphi(m)$ by Lemma \ref{Galois}, where $\varphi$ is the Euler function, in order to determine $[K:\mathbb{Q}]$ it remains to determine $[\mathbb{Q}(\omega_m):K]$. Since $\mathbb{Q}(\omega_m)/K$ is a finite Galois extension, $[\mathbb{Q}(\omega_m):K]=|\mathrm{Gal}(\mathbb{Q}(\omega_m)/K)|$. Thus we need to analyze the Galois group $\mathrm{Gal}(\mathbb{Q}(\omega_m)/K)=\{\sigma:\mathbb{Q}(\omega_m)\longrightarrow \mathbb{Q}(\omega_m) {\rm\ is\ a\ field\ isomorphism} \mid \sigma(a)=a {\rm\ for\ any}\ a\in K\}$.

By \eqref{eqn:K0}, $K=\mathbb{Q}(\chi(I_1),\chi(I_2\setminus I_3)-\chi(I_3\setminus I_2)\mid \chi\in \mathrm{Irr}(G))$, and so for each $\sigma\in \mathrm{Gal}(\mathbb Q(\omega_m)/K)$ and  $\chi\in \mathrm{Irr}(G)$, we have
\begin{equation}\label{eqn:I1}
	\sigma(\chi(I_1))=\chi(I_1)
\end{equation}
and
\begin{equation}\label{eqn:222}	
	\sigma(\chi(I_2 \setminus I_3)-\chi(I_3 \setminus I_2))=\chi(I_2\setminus I_3)-\chi(I_3 \setminus I_2).
\end{equation}
Hence to determine $\mathrm{Gal}(\mathbb{Q}(\omega_m)/K)$, we need to know when \eqref{eqn:I1} and \eqref{eqn:222} hold.

Before we go further, we make the following preparations. Let $\rho$ be an irreducible representation of $G$ affording the character $\chi$. For $g\in G$, suppose that $\alpha_1,\ldots,\alpha_{d_\chi}$ are all eigenvalues of $\rho_g$. It follows that for any positive integer $z$, the eigenvalues of $\rho_{g^z}$ are $\alpha_1^z,\ldots,\alpha_{d_\chi}^z$. Since $m$ is the exponent of $G$, $m$ is divisible by the order of $g$, so $\alpha_i$ is an $m$-th root of unity for each $i=1,\ldots,d_\chi$. Note that $\mathrm{Gal}(\mathbb{Q}(\omega_m)/K)\leq \mathrm{Gal}(\mathbb{Q}(\omega_m)/\mathbb{Q})$ and there exists a group isomorphism from $\mathrm{Gal}(\mathbb{Q}(\omega_m)/\mathbb{Q})$ to $\mathbb{Z}_m^*$ by Lemma \ref{Galois}, i.e.
\begin{equation}\label{eqn:eta}
\eta:\mathrm{Gal}(\mathbb{Q}(\omega_m)/\mathbb{Q})\longrightarrow \mathbb{Z}_m^*,~~~~~ \sigma \longmapsto t,
\end{equation}
where $t$ satisfies $\sigma(\omega_m)=\omega_m^t$.
Suppose $\sigma\in \mathrm{Gal}(\mathbb{Q}(\omega_m)/\mathbb{Q})$ and $\eta(\sigma)=t$. Then for $g\in G$,
$$\sigma(\chi(g))=\sigma(\mathrm{Tr}(\rho_g))=\sigma(\sum\limits_{i=1}^{d_\chi}\alpha_i)=
\sum\limits_{i=1}^{d_\chi}\sigma(\alpha_i)=\sum\limits_{i=1}^{d_\chi}\alpha_i^t=\chi(g^t).$$
Thus for every multiset $X$ of $G$ we have
\begin{equation}\label{eqn:Xt}
	\sigma(\chi(X))=\sigma(\sum_{g\in X}\chi(g))=\sum_{g\in X}\sigma(\chi(g))=\sum_{g\in X}\chi(g^t)=\chi(X^t).
\end{equation}
Denote by $\eta_K\subseteq \mathbb{Z}_m^*$ the image of $\mathrm{Gal}(\mathbb Q(\omega_m)/K)$ under $\eta$. Now we are ready to determine when \eqref{eqn:I1} and \eqref{eqn:222} hold.

\begin{pro}\label{pro:RLK}
$\sigma(\chi(I_1))=\chi(I_1)$ for each $\sigma\in \mathrm{Gal}(\mathbb Q(\omega_m)/K)$ and each $\chi\in \mathrm{Irr}(G)$ if and only if $I_1^t=I_1$ for all $t\in \eta_K$.
\end{pro}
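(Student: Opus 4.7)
The plan is to reduce each side of the biconditional to a statement about the multiset $I_1^t$, using formula \eqref{eqn:Xt} to translate the Galois action on character values into an action by powering on multisets, and then to invoke Lemma \ref{a=b} to compare multisets via their irreducible character values.

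First I would record two easy but essential observations. The multiset $I_1 = T_{11} \cup T_{22}$ is conjugate-closed by hypothesis, and for any positive integer $t$ the multiset $I_1^t = [g^t : g \in I_1]$ is also conjugate-closed, because conjugation commutes with taking powers: $h g^t h^{-1} = (hgh^{-1})^t$ for all $h,g \in G$. This allows us to apply Lemma \ref{a=b} to the pair $(I_1, I_1^t)$.

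For the forward direction, assume $\sigma(\chi(I_1)) = \chi(I_1)$ for all $\sigma \in \mathrm{Gal}(\mathbb{Q}(\omega_m)/K)$ and every $\chi \in \mathrm{Irr}(G)$. Given $t \in \eta_K$, by definition of $\eta_K$ there is some $\sigma \in \mathrm{Gal}(\mathbb{Q}(\omega_m)/K)$ with $\eta(\sigma) = t$. Applying \eqref{eqn:Xt} to the multiset $X = I_1$ yields $\chi(I_1^t) = \sigma(\chi(I_1)) = \chi(I_1)$ for every $\chi \in \mathrm{Irr}(G)$. Since both $I_1$ and $I_1^t$ are conjugate-closed, Lemma \ref{a=b} delivers $I_1^t = I_1$. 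The converse direction runs the same argument in reverse: if $I_1^t = I_1$ for every $t \in \eta_K$, and $\sigma \in \mathrm{Gal}(\mathbb{Q}(\omega_m)/K)$ is arbitrary with $t := \eta(\sigma) \in \eta_K$, then \eqref{eqn:Xt} gives $\sigma(\chi(I_1)) = \chi(I_1^t) = \chi(I_1)$ for every $\chi \in \mathrm{Irr}(G)$.

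There is essentially no main obstacle: all the genuine work (the identification of $\mathrm{Gal}(\mathbb{Q}(\omega_m)/\mathbb{Q})$ with $\mathbb{Z}_m^*$, the computation $\sigma(\chi(X)) = \chi(X^t)$, and the character-theoretic criterion for equality of conjugate-closed multisets) has already been packaged into Lemma \ref{Galois}, equation \eqref{eqn:Xt}, and Lemma \ref{a=b}. The only subtlety to flag is the stability of conjugate-closedness under the $t$-th power map, which is needed to legitimately apply Lemma \ref{a=b}.
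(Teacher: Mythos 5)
Your proof is correct and follows essentially the same route as the paper: translate the Galois action into the power map via \eqref{eqn:Xt} and then compare the conjugate-closed multisets $I_1$ and $I_1^t$ with Lemma \ref{a=b}. Your explicit remark that $I_1^t$ remains conjugate-closed (since $hg^th^{-1}=(hgh^{-1})^t$) is a small point the paper leaves implicit, but otherwise the two arguments coincide.
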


\begin{proof}
By \eqref{eqn:Xt}, for $\sigma\in \mathrm{Gal}(\mathbb Q(\omega_m)/K)$ and $\chi\in \mathrm{Irr}(G)$, $\sigma(\chi(I_1))=\chi(I_1)$ if and only if $\chi(I_1^t)=\chi(I_1)$, where $t\in \eta_K$ such that $\eta(\sigma)=t$. Then applying Lemma \ref{a=b}, we have $\chi(I_1^t)=\chi(I_1)$ for each $\sigma\in \mathrm{Gal}(\mathbb Q(\omega_m)/K)$ and $\chi\in \mathrm{Irr}(G)$ if and only if $I_1^t=I_1$ for all $t\in \eta_K$.
\end{proof}

\begin{pro}\label{pro:IJ}
$\sigma(\chi(I_2 \setminus I_3)-\chi(I_3 \setminus I_2))=\chi(I_2\setminus I_3)-\chi(I_3 \setminus I_2)$ for each $\sigma\in \mathrm{Gal}(\mathbb Q(\omega_m)/K)$ and each $\chi\in \mathrm{Irr}(G)$ if and only if $I_2\setminus I_3=(I_2\setminus I_3)^t$ and $I_3\setminus I_2=(I_3\setminus I_2)^t$ for all $t\in \eta_K$.
\end{pro}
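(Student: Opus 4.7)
The plan is to adapt the proof of Proposition \ref{pro:RLK} to handle the difference character value $\chi(A)-\chi(B)$, where $A:=I_2\setminus I_3$ and $B:=I_3\setminus I_2$. By the definition \eqref{eqn:I}, both $A$ and $B$ are conjugate-closed multisets, and directly from their construction we have $A\cap B=\emptyset$. The $(\Leftarrow)$ direction is routine: if $A^t=A$ and $B^t=B$ for every $t\in\eta_K$, then for any $\sigma\in\mathrm{Gal}(\mathbb{Q}(\omega_m)/K)$ with $\eta(\sigma)=t$, identity \eqref{eqn:Xt} gives $\sigma(\chi(A))=\chi(A^t)=\chi(A)$ and similarly $\sigma(\chi(B))=\chi(B)$, so $\sigma$ fixes $\chi(A)-\chi(B)$ for every $\chi\in\mathrm{Irr}(G)$.

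For the nontrivial direction, fix $t\in\eta_K$ and lift it to some $\sigma\in\mathrm{Gal}(\mathbb{Q}(\omega_m)/K)$ with $\eta(\sigma)=t$. Applying \eqref{eqn:Xt} to the hypothesis $\sigma(\chi(A)-\chi(B))=\chi(A)-\chi(B)$ and rearranging gives $\chi(A^t\cup B)=\chi(A\cup B^t)$ for every $\chi\in\mathrm{Irr}(G)$. Since $A$, $B$, $A^t$, $B^t$ are each conjugate-closed, so are the two sides, and Lemma \ref{a=b} then yields the multiset identity
\begin{equation*}
A^t\cup B \;=\; A\cup B^t.
\end{equation*}

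To split this pooled identity into the desired $A^t=A$ and $B^t=B$, I would first observe that the map $g\mapsto g^t$ is a bijection of $G$ for every $t\in\mathbb{Z}_m^*$: picking $t'\in\mathbb{Z}$ with $tt'\equiv 1\pmod m$, the relation $(g^t)^{t'}=g^{tt'}=g$ (since the order of $g$ divides the exponent $m$) supplies an explicit inverse. Writing $n_X(g)$ for the multiplicity of $g$ in a multiset $X$, this bijectivity gives $n_{X^t}(g)=n_X(g^{t'})$, so the multiset equality above becomes the pointwise identity $n_A(g^{t'})-n_B(g^{t'})=n_A(g)-n_B(g)$ for all $g\in G$. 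Equivalently, the signed function $f(g):=n_A(g)-n_B(g)$ is invariant under the bijection $g\mapsto g^{t'}$. The disjointness $A\cap B=\emptyset$ now becomes essential: it implies $n_A(g)=\max(f(g),0)$ and $n_B(g)=\max(-f(g),0)$, so invariance of $f$ forces invariance of both $n_A$ and $n_B$, which is precisely $A^t=A$ and $B^t=B$.

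I expect the separation step in the last paragraph to be the main obstacle: the Galois-invariance hypothesis yields only one coupled identity $A^t\cup B=A\cup B^t$, and decoupling it requires both the bijectivity of the $t$-th power map (which, crucially, uses only $\gcd(t,m)=1$ and does not require $G$ to be abelian) and the recovery of the individual multisets as the positive and negative parts of the signed difference, which relies on $A\cap B=\emptyset$. The rest of the argument is a routine translation between Galois action on $\mathbb{Q}(\omega_m)$ and the exponent action $X\mapsto X^t$ on multisets via \eqref{eqn:Xt}, together with a single application of Lemma \ref{a=b}.
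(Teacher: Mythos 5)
Your proof is correct and follows essentially the same route as the paper's: rewrite the Galois-invariance condition via \eqref{eqn:Xt} as the single multiset character identity $\chi((I_2\setminus I_3)\cup(I_3\setminus I_2)^t)=\chi((I_3\setminus I_2)\cup(I_2\setminus I_3)^t)$, apply Lemma \ref{a=b}, and decouple using the disjointness of $I_2\setminus I_3$ and $I_3\setminus I_2$. The only difference is that you spell out the final decoupling step (bijectivity of $g\mapsto g^t$ and recovering the two multisets as positive and negative parts of $n_A-n_B$), which the paper asserts in one sentence.
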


\begin{proof}
By \eqref{eqn:Xt}, for $\sigma\in \mathrm{Gal}(\mathbb Q(\omega_m)/K)$ and $\chi\in \mathrm{Irr}(G)$, $\sigma(\chi(I_2 \setminus I_3)-\chi(I_3 \setminus I_2))=\chi(I_2\setminus I_3)-\chi(I_3 \setminus I_2)$ if and only if $\chi((I_2\setminus I_3)^t)-\chi((I_3\setminus I_2)^t)=\chi(I_2\setminus I_3)-\chi(I_3 \setminus I_2)$ where $t\in \eta_K$ such that $\eta(\sigma)=t$, if and only if $\chi(I_2\setminus I_3)+\chi((I_3\setminus I_2)^t)=\chi(I_3\setminus I_2)+\chi((I_2\setminus I_3)^t)$, if and only if $\chi(A)=\chi(B)$ where $A=(I_2\setminus I_3)\cup (I_3\setminus I_2)^t$ and $B=(I_3\setminus I_2)\cup (I_2\setminus I_3)^t$. By Lemma \ref{a=b}, $\chi(A)=\chi(B)$ for all $\chi\in\mathrm{Irr}(G)$ if and only if $A=B$. Since $I_2\setminus I_3$ and $I_3\setminus I_2$ are disjoint, $A=B$ if and only if $I_2\setminus I_3=(I_2\setminus I_3)^t$ and $I_3\setminus I_2=(I_3\setminus I_2)^t$, as desired.
\end{proof}

\begin{lem}\label{pro:K}
Let $T=\{t\in \mathbb{Z}_m^*\mid (I_1)^t=I_1, (I_2\setminus I_3)^t=I_2\setminus I_3, (I_3\setminus I_2)^t=I_3\setminus I_2\}$. Then $\eta_K=T$ and $[\mathbb{Q}(\omega_m):K]=|T|$.
Moreover, $K=\{x\in \mathbb{Q}(\omega_m)\mid \sigma(x)=x \ \text{for all} \ \sigma \in \eta^{-1}(T)\}$.
\end{lem}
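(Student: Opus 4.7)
The plan is to decompose the claim into three steps, each of which reduces to results already established in the excerpt. First I would prove the set equality $\eta_K=T$ by showing inclusions in both directions. For the inclusion $\eta_K\subseteq T$, I would take an arbitrary $\sigma\in \mathrm{Gal}(\mathbb Q(\omega_m)/K)$ and set $t=\eta(\sigma)$. Since $\sigma$ fixes $K$ pointwise and, by \eqref{eqn:K0}, $K$ is generated over $\mathbb{Q}$ by the values $\chi(I_1)$ and $\chi(I_2\setminus I_3)-\chi(I_3\setminus I_2)$ as $\chi$ ranges over $\mathrm{Irr}(G)$, the identities \eqref{eqn:I1} and \eqref{eqn:222} hold for every $\chi$. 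Propositions \ref{pro:RLK} and \ref{pro:IJ} then yield $I_1^t=I_1$, $(I_2\setminus I_3)^t=I_2\setminus I_3$, and $(I_3\setminus I_2)^t=I_3\setminus I_2$, i.e.\ $t\in T$.

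For the reverse inclusion, I would start from $t\in T$ and consider the unique $\sigma\in\mathrm{Gal}(\mathbb Q(\omega_m)/\mathbb Q)$ with $\eta(\sigma)=t$ (available thanks to Lemma \ref{Galois}). Applying \eqref{eqn:Xt} and using the hypotheses $I_1^t=I_1$, $(I_2\setminus I_3)^t=I_2\setminus I_3$, $(I_3\setminus I_2)^t=I_3\setminus I_2$ directly gives $\sigma(\chi(I_1))=\chi(I_1)$ and $\sigma(\chi(I_2\setminus I_3)-\chi(I_3\setminus I_2))=\chi(I_2\setminus I_3)-\chi(I_3\setminus I_2)$ for every $\chi\in\mathrm{Irr}(G)$. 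Since these elements generate $K$ over $\mathbb{Q}$, the automorphism $\sigma$ fixes $K$ pointwise, so $\sigma\in\mathrm{Gal}(\mathbb Q(\omega_m)/K)$ and $t\in\eta_K$. This establishes $\eta_K=T$.

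Once $\eta_K=T$ is in hand, the degree formula is immediate. The extension $\mathbb{Q}(\omega_m)/\mathbb{Q}$ is Galois by Lemma \ref{Galois}, and since $\mathbb{Q}\leq K\leq \mathbb{Q}(\omega_m)$ the intermediate extension $\mathbb{Q}(\omega_m)/K$ is also Galois, so $[\mathbb{Q}(\omega_m):K]=|\mathrm{Gal}(\mathbb{Q}(\omega_m)/K)|$. Because $\eta$ is a group isomorphism, restricting it to $\mathrm{Gal}(\mathbb{Q}(\omega_m)/K)$ gives a bijection onto $\eta_K=T$, whence $[\mathbb{Q}(\omega_m):K]=|T|$. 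Finally, for the last claim, I would invoke the Galois correspondence: for a finite Galois extension the base field is precisely the fixed field of the Galois group, so
\[
K=\{x\in\mathbb{Q}(\omega_m)\mid \sigma(x)=x \text{ for all } \sigma\in \mathrm{Gal}(\mathbb{Q}(\omega_m)/K)\},
\]
and $\mathrm{Gal}(\mathbb{Q}(\omega_m)/K)=\eta^{-1}(\eta_K)=\eta^{-1}(T)$ by the bijectivity of $\eta$.

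I do not anticipate a real obstacle here; the only moving part is making sure that $K$ really is generated by the two families $\{\chi(I_1)\}$ and $\{\chi(I_2\setminus I_3)-\chi(I_3\setminus I_2)\}$ so that fixing these generators forces $\sigma$ to fix all of $K$, but this is exactly the definition \eqref{eqn:K0}. The argument is essentially a clean application of the Galois correspondence, with Propositions \ref{pro:RLK} and \ref{pro:IJ} serving as the translation between fixing the character-values generating $K$ and stabilising the three conjugate-closed multisets under the power map $x\mapsto x^t$.
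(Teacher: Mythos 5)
Your proof is correct and follows essentially the same route as the paper: both reduce $\eta_K=T$ to Propositions \ref{pro:RLK} and \ref{pro:IJ}, obtain $[\mathbb{Q}(\omega_m):K]=|T|$ from the isomorphism $\eta$ together with the Galois property of $\mathbb{Q}(\omega_m)/K$, and conclude with the fixed-field characterization from the Galois correspondence. Your explicit two-inclusion argument for $\eta_K=T$ (in particular, checking that the $\sigma$ with $\eta(\sigma)=t$ for $t\in T$ actually fixes the generators of $K$ via \eqref{eqn:Xt} before concluding $\sigma\in\mathrm{Gal}(\mathbb{Q}(\omega_m)/K)$) is slightly more careful than the paper's one-line citation of the two propositions, but it is the same underlying argument.
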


\begin{proof}
Since $K=\mathbb{Q}(\chi(I_1),\chi(I_2\setminus I_3)-\chi(I_3\setminus I_2)\mid \chi\in \mathrm{Irr}(G))$ by \eqref{eqn:K0}, it follows from Propositions \ref{pro:RLK} and \ref{pro:IJ} that $\eta_K=T$. Since $\eta$ is a group isomorphism, $|\mathrm{Gal}(\mathbb{Q}(\omega_m)/K)|=|\eta_K|=|T|$, which implies $[\mathbb{Q}(\omega_m):K]=|T|$. Note that $\eta^{-1}(T)=\eta^{-1}(\eta_K)=\mathrm{Gal}(\mathbb{Q}(\omega_m)/K)$ keeps every element of $K$ invariant, and hence $K=\{x\in \mathbb{Q}(\omega_m)\mid \sigma(x)=x \ \text{for all} \ \sigma \in \eta^{-1}(T)\}$.
\end{proof}

We are in a position to give our main theorem in this paper.

\begin{thm}\label{main}
Let $G$ be a finite group and $\Gamma=\mathrm{SC}(G,T_{11},T_{22},T_{12},T_{21})$ be a quasi-abelian semi-Cayley digraph. Let $m$ be the exponent of $G$ and $\omega_m$ be a primitive $m$-th root of unity. Then the splitting field of $\Gamma$ is
\begin{align*}
\mathrm{SF}(\Gamma)=&\mathbb{Q}(\chi(I_1),\sqrt{d_\chi(\chi(I_2\setminus I_3)-\chi(I_3\setminus I_2))}\mid \chi\in \mathrm{Irr}(G))\\
=&K(\sqrt{d_\chi(\chi(I_2\setminus I_3)-\chi(I_3\setminus I_2))}\mid \chi\in \mathrm{Irr}(G)),
\end{align*}
and the algebraic degree of $\Gamma$ is
$$\mathrm{deg}(\Gamma)=\frac{\varphi(m)|M|}{|T|},$$
where $I_1,I_2$ and $I_3$ are defined in \eqref{eqn:I},
$T=\{t\in \mathbb{Z}_m^*\mid (I_1)^t=I_1, (I_2\setminus I_3)^t=I_2\setminus I_3, (I_3\setminus I_2)^t=I_3\setminus I_2\}$, $K=\mathbb{Q}(\chi(I_1),\chi(I_2\setminus I_3)-\chi(I_3\setminus I_2)\mid \chi\in \mathrm{Irr}(G))=\{x\in \mathbb{Q}(\omega_m)\mid \sigma(x)=x \ \text{for all} \ \sigma \in \eta^{-1}(T)\}$, $\eta$ is defined in \eqref{eqn:eta},
and $M=\langle [d_\chi(\chi(I_2\setminus I_3)-\chi(I_3\setminus I_2))]_K\mid \chi\in \mathrm{Irr}(G)\rangle$.
\end{thm}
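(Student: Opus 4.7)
The proof is essentially an assembly of the structural results already established in Section \ref{sec:3.2}, so the plan is to put the pieces together rather than to do new work.

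First I would establish the two stated forms of $\mathrm{SF}(\Gamma)$. By Lemma \ref{lem:eigenvalues}, every eigenvalue of the adjacency matrix of $\Gamma$ has the shape $(\chi(I_1)\pm\sqrt{d_\chi(\chi(I_2\setminus I_3)-\chi(I_3\setminus I_2))})/(2d_\chi)$ as $\chi$ ranges over $\mathrm{Irr}(G)$. Taking the smallest extension of $\mathbb{Q}$ containing all these eigenvalues is precisely the field displayed in \eqref{eqn:splitting filed 0}, and adjoining the square roots to $K$ (as defined in \eqref{eqn:K0}) rewrites this as the field displayed in \eqref{splitting field}. This justifies the two claimed descriptions of $\mathrm{SF}(\Gamma)$ in the theorem.

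Next I would compute the algebraic degree via the tower
\[
\mathrm{deg}(\Gamma)=[\mathrm{SF}(\Gamma):\mathbb{Q}]=[\mathrm{SF}(\Gamma):K]\cdot [K:\mathbb{Q}].
\]
The upper piece $[\mathrm{SF}(\Gamma):K]=|M|$ is exactly Lemma \ref{lem:F/K}. For the lower piece, I would use the inclusions $\mathbb{Q}\leq K\leq \mathbb{Q}(\omega_m)$, which are valid because each $\chi(g)$ lies in $\mathbb{Q}(\omega_m)$ (eigenvalues of $\rho_g$ are $m$-th roots of unity), and apply the tower law in the form
\[
[K:\mathbb{Q}]=\frac{[\mathbb{Q}(\omega_m):\mathbb{Q}]}{[\mathbb{Q}(\omega_m):K]}=\frac{\varphi(m)}{|T|},
\]
where the numerator is given by Lemma \ref{Galois} and the denominator by Lemma \ref{pro:K}. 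Combining the two factors gives $\mathrm{deg}(\Gamma)=\varphi(m)|M|/|T|$.

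Finally I would record the alternative fixed-field description $K=\{x\in\mathbb{Q}(\omega_m)\mid \sigma(x)=x\text{ for all }\sigma\in\eta^{-1}(T)\}$, which is the second half of Lemma \ref{pro:K}, and note that the set $T$ in the theorem statement coincides with the one appearing there. Since every ingredient has been prepared in the preceding lemmas and propositions, there is no genuine obstacle left; the only thing to be careful about is to verify that the various defining conditions for $K$ and $T$ match exactly the quantities that enter \eqref{eqn:splitting filed 0}--\eqref{eqn:splitting field 1}, which follows from Propositions \ref{pro:RLK} and \ref{pro:IJ} via the character criterion of Lemma \ref{a=b}.
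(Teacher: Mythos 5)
Your proposal is correct and follows exactly the paper's own argument: both derive the two descriptions of $\mathrm{SF}(\Gamma)$ from \eqref{eqn:splitting filed 0}--\eqref{splitting field}, and both compute $\mathrm{deg}(\Gamma)$ via the tower $[\mathrm{SF}(\Gamma):K][K:\mathbb{Q}]$ using Lemma \ref{lem:F/K} for $|M|$, Lemma \ref{Galois} for $\varphi(m)$, and Lemma \ref{pro:K} for $|T|$ and the fixed-field description of $K$. No differences worth noting.
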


\begin{proof}
By \eqref{eqn:splitting filed 0}, \eqref{eqn:K0} and \eqref{splitting field}, the splitting field of $\Gamma$ is $\mathbb{Q}(\chi(I_1),\sqrt{d_\chi(\chi(I_2\setminus I_3)-\chi(I_3\setminus I_2))}\mid \chi\in \mathrm{Irr}(G))=K(\sqrt{d_\chi(\chi(I_2\setminus I_3)-\chi(I_3\setminus I_2))}\mid \chi\in \mathrm{Irr}(G))$. By \eqref{eqn:splitting field 1}, the algebraic degree of $\Gamma$ is
$$\mathrm{deg}(\Gamma)=[\mathrm{SF}(\Gamma):\mathbb{Q}]=[\mathrm{SF}(\Gamma):K][K:\mathbb{Q}]
=\frac{[\mathrm{SF}(\Gamma):K][\mathbb{Q}(\omega_m):\mathbb{Q}]}{[\mathbb{Q}(\omega_m):K]}=
\frac{|M|\varphi(m)}{|T|},$$
where the last equality follows from Lemmas \ref{Galois}, \ref{lem:F/K} and \ref{pro:K}.
\end{proof}

The following corollary gives the splitting field and the algebraic degree of an abelian semi-Cayley digraph.

\begin{core}\label{abelian} {\rm (abelian semi-Cayley digraph)}
Let $G$ be a finite abelian group and $\Gamma=\mathrm{SC}(G,T_{11}$, $T_{22},T_{12},T_{21})$. Let $m$ be the exponent of $G$ and $\omega_m$ be a primitive $m$-th root of unity.  Then the splitting field of $\Gamma$ is
$$K(\sqrt{\chi(I_2\setminus I_3)-\chi(I_3\setminus I_2)}\mid \chi\in \mathrm{Irr}(G)),$$
and its algebraic degree is
$$\mathrm{deg}(\Gamma)=\frac{\varphi(m)|M|}{|T|},$$
where $T=\{t\in \mathbb{Z}_m^*\mid (I_1)^t=I_1,(I_2\setminus I_3)^t=I_2\setminus I_3, (I_3\setminus I_2)^t=I_3\setminus I_2\}$, $K=\{x\in \mathbb{Q}(\omega_m)\mid \sigma(x)=x \ \text{for all} \ \sigma \in \eta^{-1}(T)\}$ and
$M=\langle [\chi(I_2\setminus I_3)-\chi(I_3\setminus I_2)]_K\mid \chi\in \mathrm{Irr}(G)\rangle$.
\end{core}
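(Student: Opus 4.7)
The plan is to derive this as a direct specialization of Theorem \ref{main}, exploiting the fact that every irreducible character of a finite abelian group has degree one. I would first recall (or cite the standard representation theory fact) that for a finite abelian $G$ we have $d_\chi = 1$ for every $\chi \in \mathrm{Irr}(G)$; in particular $\mathrm{Irr}(G)$ consists of $|G|$ one-dimensional characters and every subset of $G$ is conjugate-closed, so the hypothesis that $\Gamma$ is quasi-abelian is automatic.

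Next I would substitute $d_\chi = 1$ into the splitting-field expression of Theorem \ref{main}. Since $\sqrt{d_\chi(\chi(I_2\setminus I_3)-\chi(I_3\setminus I_2))} = \sqrt{\chi(I_2\setminus I_3)-\chi(I_3\setminus I_2)}$ in this case, the stated formula
\[
\mathrm{SF}(\Gamma) = K\bigl(\sqrt{\chi(I_2\setminus I_3)-\chi(I_3\setminus I_2)} \mid \chi\in \mathrm{Irr}(G)\bigr)
\]
follows immediately, with $K$ and the multisets $I_1,I_2,I_3$ defined exactly as in Theorem \ref{main} and \eqref{eqn:I}.

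For the algebraic degree, I would likewise copy the identity
$\mathrm{deg}(\Gamma) = \varphi(m)|M|/|T|$
from Theorem \ref{main}, noting that the defining conditions for $T$ depend only on the multisets $I_1$, $I_2\setminus I_3$, $I_3\setminus I_2$ and hence are unchanged. The only simplification is in $M$: after substituting $d_\chi = 1$ the generator $[d_\chi(\chi(I_2\setminus I_3)-\chi(I_3\setminus I_2))]_K$ becomes $[\chi(I_2\setminus I_3)-\chi(I_3\setminus I_2)]_K$, giving the stated $M$. The description of $K$ as the fixed field $\{x\in \mathbb{Q}(\omega_m)\mid \sigma(x)=x \ \text{for all} \ \sigma \in \eta^{-1}(T)\}$ is identical to the one in Theorem \ref{main} and requires no further argument.

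Since the statement is a clean specialization, there is no genuine obstacle — the only care needed is to verify that no intermediate quantity used to derive Theorem \ref{main} degenerates when $d_\chi = 1$ (e.g.\ the denominators $2d_\chi$ in Lemma \ref{lem:eigenvalues} remain nonzero, and Lemma \ref{class-function} still applies), which is immediate. Hence the corollary follows by plugging $d_\chi = 1$ into each occurrence in the conclusion of Theorem \ref{main}.
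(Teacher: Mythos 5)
Your proposal is correct and is essentially identical to the paper's own proof, which simply observes that $d_\chi=1$ for every irreducible character of a finite abelian group and then invokes Theorem \ref{main}. The extra remarks you add (quasi-abelianness being automatic, no degeneration of intermediate quantities) are accurate but not needed.
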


\begin{proof}
Since $G$ is a finite abelian group, $d_{\chi}=1$ for all $\chi\in \mathrm{Irr}(G)$. Then apply Theorem \ref{main}.
\end{proof}

\begin{rem}\label{remark:abelian}
The splitting field and the algebraic degree of a semi-Cayley digraph $\Gamma$ over an abelian group $G$ of order $n$ are also given in {\rm \cite[Theorem $3.5$]{Algebraic degrees of 2-Cayley digraphs over abelian groups}}. Let $H=\{h\in \mathbb{Z}_n^*\mid (I_1)^h=I_1,(I_2\setminus I_3)^h=I_2\setminus I_3, (I_3\setminus I_2)^h=I_3\setminus I_2\}$, $L=\{x\in \mathbb{Q}(\omega_n)\mid \sigma(x)=x \ \text{for all} \ \sigma \in \eta^{-1}(H)\}$ and
$M=\langle [\chi(I_2\setminus I_3)-\chi(I_3\setminus I_2)]_K\mid \chi\in \mathrm{Irr}(G)\rangle$. It is shown in {\rm \cite[Theorem $3.5$]{Algebraic degrees of 2-Cayley digraphs over abelian groups}} that the splitting field of $\Gamma$ is $L(\sqrt{\chi(I_2\setminus I_3)-\chi(I_3\setminus I_2)}\mid \chi\in \mathrm{Irr}(G))$ and its algebraic degree is $\mathrm{deg}(\Gamma)=\frac{\varphi(n)|M|}{|H|}$. We can apply Corollary $\ref{abelian}$ to obtain {\rm \cite[Theorem $3.5$]{Algebraic degrees of 2-Cayley digraphs over abelian groups}} by using the one-to-one mapping $\phi$ from $\mathbb{Z}_n^*/H$ to $\mathbb{Z}_m^*/T$ such that $\phi(aH)=(a \pmod{m})T$ for $aH\in \mathbb{Z}_n^*/H$, which implies $\frac{\varphi(m)}{|T|}=\frac{\varphi(n)}{|H|}$. When the exponent $m$ of $G$ is less than the order $n$ of $G$, especially when $G$ is a $p$-group, it is easier to apply Corollary $\ref{abelian}$ to count $T$ and $K$ than to apply {\rm \cite[Theorem $3.5$]{Algebraic degrees of 2-Cayley digraphs over abelian groups}} to count $H$ and $L$.
\end{rem}

\begin{core}\label{core:Cayley} {\rm (quasi-abelian Cayley digraph)}
Let $G$ be a finite group and $m$ be the exponent of $G$. Let $\Gamma=\mathrm{Cay}(G,S)$ be a quasi-abelian Cayley digraph. Then the splitting field of $\Gamma$ is
$$\mathrm{SF}(\Gamma)=\{x\in \mathbb{Q}(\omega_m)\mid \sigma(x)=x \ \text{for all} \ \sigma \in \eta^{-1}(T)\}$$
and its algebraic degree is
$$\mathrm{deg}(\Gamma)=\frac{\varphi(m)}{|T|},$$
where $T=\{t\in \mathbb{Z}_m^*\mid S^t=S\}$ and $\eta$ is defined in \eqref{eqn:eta}.
\end{core}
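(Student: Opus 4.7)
The plan is to realize $\Gamma = \mathrm{Cay}(G,S)$ as a special quasi-abelian semi-Cayley digraph and then specialize Theorem \ref{main}. Concretely, set $T_{11} = T_{22} = S$ and $T_{12} = T_{21} = \emptyset$, and consider $\Gamma' = \mathrm{SC}(G,S,S,\emptyset,\emptyset)$. From the definition, the adjacency matrix of $\Gamma'$ is the block diagonal matrix $\mathrm{diag}(A,A)$, where $A$ is the adjacency matrix of $\Gamma$. In particular $\Gamma'$ and $\Gamma$ have the same spectrum (up to doubled multiplicities), hence the same splitting field and the same algebraic degree, so it suffices to apply Theorem \ref{main} to $\Gamma'$.

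Next I would compute the multisets in \eqref{eqn:I} for this choice. Using that $T_{12}T_{21} = \emptyset$ by the convention on empty multisets, one obtains $I_1 = 2{*}S$, $I_2 = 2{*}(SS)$, and $I_3 = 2{*}(SS)$. Hence $I_2 \setminus I_3 = I_3 \setminus I_2 = \emptyset$, so every radicand $d_\chi(\chi(I_2\setminus I_3) - \chi(I_3\setminus I_2))$ in Lemma \ref{lem:eigenvalues} vanishes, and the eigenvalues of $\Gamma'$ collapse to $\lambda_\chi^{\pm} = \chi(S)/d_\chi$. Plugging into Theorem \ref{main}, the base field reduces to $K = \mathbb{Q}(\chi(I_1) \mid \chi \in \mathrm{Irr}(G)) = \mathbb{Q}(\chi(S) \mid \chi \in \mathrm{Irr}(G))$, no further square roots are adjoined (so $\mathrm{SF}(\Gamma) = K$, i.e.\ $[\mathrm{SF}(\Gamma):K] = 1$), and the set $T$ collapses to $\{t \in \mathbb{Z}_m^* \mid (2{*}S)^t = 2{*}S\} = \{t \in \mathbb{Z}_m^* \mid S^t = S\}$ because the two remaining conditions hold vacuously.

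Finally, the description $\mathrm{SF}(\Gamma) = K = \{x\in \mathbb{Q}(\omega_m)\mid \sigma(x) = x \text{ for all } \sigma\in \eta^{-1}(T)\}$ is precisely the last conclusion of Lemma \ref{pro:K}, and $\mathrm{deg}(\Gamma) = \varphi(m)|M|/|T|$ from Theorem \ref{main} becomes $\varphi(m)/|T|$ once we note $|M|=1$. The only subtlety worth addressing explicitly is that the expression $M = \langle [d_\chi(\chi(I_2\setminus I_3)-\chi(I_3\setminus I_2))]_K\rangle = \langle [0]_K\rangle$ is not literally well-defined since $0 \notin K^\times$; however, interpreted through \eqref{splitting field}, it merely says that no new $\sqrt{0}$ needs to be adjoined to $K$, so $\mathrm{SF}(\Gamma) = K$ and $[\mathrm{SF}(\Gamma):K] = 1$. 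This is the one bookkeeping point to state carefully; everything else is a direct specialization, making the corollary essentially immediate from Theorem \ref{main}.
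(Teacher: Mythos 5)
Your proof is correct, and it reaches the corollary by a slightly different specialization than the paper does. The paper realizes $\Gamma$ as $\mathrm{SC}(G,S,\emptyset,\emptyset,\emptyset)$ (adjacency matrix $\mathrm{diag}(A,0)$), so that $I_1=S$, $I_2=SS$, $I_3=\emptyset$; it then needs Lemma \ref{class-function}(3) to rewrite the radicand as $d_\chi\chi(SS)=(\chi(S))^2$, a square already lying in $K=\mathbb{Q}(\chi(S)\mid\chi\in\mathrm{Irr}(G))$, whence $\mathrm{SF}(\Gamma)=K$ and $M=\langle[(\chi(S))^2]_K\rangle$ is trivial. You instead take $\mathrm{SC}(G,S,S,\emptyset,\emptyset)$ (adjacency matrix $\mathrm{diag}(A,A)$), which forces $I_2=I_3=2*(SS)$, so both difference multisets are empty and the radicand vanishes identically. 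Your route buys two small simplifications. First, the set $T$ of Theorem \ref{main} collapses to $\{t\in\mathbb{Z}_m^*\mid S^t=S\}$ for free, because the conditions on $I_2\setminus I_3$ and $I_3\setminus I_2$ are vacuous; with the paper's choice one must still check that $S^t=S$ implies $(SS)^t=SS$ (this does follow, e.g.\ from $\chi((SS)^t)=\sigma(\chi(SS))=\sigma(\chi(S))^2/d_\chi=\chi(S)^2/d_\chi=\chi(SS)$ for all $\chi$ together with Lemma \ref{a=b}, but it deserves a line, which the paper omits). Second, you explicitly flag that $[0]_K$ is not a well-defined element of $K^\times/K^{\times 2}$ and must be read as contributing nothing to $M$ via \eqref{splitting field}; the paper's version has the same degenerate case whenever $\chi(S)=0$ and handles it equally implicitly. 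Both arguments are direct specializations of Theorem \ref{main} and both are sound; yours trades the use of Lemma \ref{class-function}(3) for a cleaner bookkeeping of $T$.
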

	
\begin{proof}
It is clear that $\mathrm{Cay}(G,S)$ and $\mathrm{SC}(G,S,\emptyset,\emptyset,\emptyset)$ have the same splitting field, and so have the same algebraic degree. Then $I_1=S$, $I_2=S^2$ and $I_3=\emptyset$. By Lemma \ref{class-function}(3), $d_\chi\chi(S^2)=(\chi(S))^2$. Thus $K=\mathbb{Q}(\chi(I_1),\chi(I_2\setminus I_3)-\chi(I_3\setminus I_2)\mid \chi\in \mathrm{Irr}(G))=\mathbb{Q}(\chi(S)\mid \chi\in \mathrm{Irr}(G))$. It follows from Theorem \ref{main} that the splitting field of $\Gamma$ is $K(\sqrt{d_\chi(\chi(I_2\setminus I_3)-\chi(I_3\setminus I_2))}\mid \chi\in \mathrm{Irr}(G))=K(\sqrt{(\chi(S))^2}\mid \chi\in \mathrm{Irr}(G))=K=\{x\in \mathbb{Q}(\omega_m)\mid \sigma(x)=x \ \text{for all} \ \sigma \in \eta^{-1}(T)\}$, $M=\langle [(\chi(S))^2]_K\mid \chi\in \mathrm{Irr}(G)\rangle=\{[1]_K\}$ and the algebraic degree of $\Gamma$ is $\frac{\varphi(m)|M|}{|T|}=\frac{\varphi(m)}{|T|}$.
\end{proof}

\begin{rem}
Corollary $\ref{core:Cayley}$ is a generalization of {\rm \cite[Theorem 1]{Algebraic degree of Cayley graphs over abelian groups and dihedral groups}} that determines the splitting field and the algebraic degree of a Cayley digraph over a finite abelian group.
\end{rem}

Let $G$ be a finite group and $S\subseteq G$. $\mathrm{BCay}(G,S)$ is known as a graph with vertex set $G_1\cup G_2$ and edge set $\{(g_1,(sg)_2)\mid g\in G, s\in S\}$ (cf. \cite{Bcay}). The following corollary gives the splitting field and the algebraic degree of $\mathrm{BCay}(G,S)$ when $S$ is conjugate-closed.

\begin{core}
Let $G$ be a finite group, $m$ be the exponent of $G$, $S$ be a conjugate-closed subset of $G$ and $\Gamma=\mathrm{BCay}(G,S)$. Then the splitting field of $\Gamma$ is
$$K(|\chi(S)|\mid \chi\in \mathrm{Irr}(G)),$$
and the algebraic degree of $\Gamma$ is
$$\mathrm{deg}(\Gamma)=\frac{\varphi(m)|M|}{|T|},$$
where $T=\{t\in \mathbb{Z}_m^*\mid (SS^{-1})^t=SS^{-1}\}$, $K=\{x\in \mathbb{Q}(\omega_m)\mid \sigma(x)=x \ \text{for all} \ \sigma \in \eta^{-1}(T)\}$ and $M=\langle [|\chi(S)|^2]_K\mid \chi\in \mathrm{Irr}(G)\rangle$.
\end{core}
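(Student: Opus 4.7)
The plan is to realize $\mathrm{BCay}(G,S)$ as a quasi-abelian semi-Cayley digraph and specialize Theorem~\ref{main}. First I would observe that $\mathrm{BCay}(G,S)$ coincides with $\mathrm{SC}(G,\emptyset,\emptyset,S,S^{-1})$: each undirected edge $\{g_1,(sg)_2\}$ supplies the arc $(g_1,(sg)_2)$ (corresponding to $T_{12}=S$) and the reverse arc $((sg)_2,g_1)$ (corresponding to $T_{21}=S^{-1}$). Since $S$, and hence $S^{-1}$, is conjugate-closed and $\emptyset$ trivially is, this semi-Cayley digraph is quasi-abelian, so Theorem~\ref{main} applies.

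Substituting $T_{11}=T_{22}=\emptyset$, $T_{12}=S$, $T_{21}=S^{-1}$ into \eqref{eqn:I} yields $I_1=\emptyset$, $I_2=4*SS^{-1}$, and $I_3=\emptyset$; in particular $I_2\setminus I_3=4*SS^{-1}$ and $I_3\setminus I_2=\emptyset$. Noting that $SS^{-1}$ is conjugate-closed, Lemma~\ref{class-function}(3) together with the standard identity $\chi(g^{-1})=\overline{\chi(g)}$ (whence $\chi(S^{-1})=\overline{\chi(S)}$) gives
$$\chi(I_2\setminus I_3)-\chi(I_3\setminus I_2)=4\chi(SS^{-1})=\frac{4\chi(S)\chi(S^{-1})}{d_\chi}=\frac{4|\chi(S)|^2}{d_\chi},$$
so $d_\chi(\chi(I_2\setminus I_3)-\chi(I_3\setminus I_2))=4|\chi(S)|^2$ and its principal square root is the non-negative real $2|\chi(S)|$. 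Plugging this into the splitting-field formula of Theorem~\ref{main} yields $\mathrm{SF}(\Gamma)=K(2|\chi(S)|\mid\chi\in\mathrm{Irr}(G))=K(|\chi(S)|\mid\chi\in\mathrm{Irr}(G))$, matching the claim.

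Finally I would read off $T$, $K$, and $M$ from Theorem~\ref{main}. The conditions defining $T$ involving $I_1$ and $I_3\setminus I_2$ are vacuous (both sides are empty), while $(4*SS^{-1})^t=4*SS^{-1}$ is clearly equivalent to $(SS^{-1})^t=SS^{-1}$, giving the stated $T$. The Galois-fixed-field description of $K$ transfers verbatim from Theorem~\ref{main}. For $M$, since $4=2^2\in K^{\times 2}$ we have $[4|\chi(S)|^2]_K=[|\chi(S)|^2]_K$, so $M=\langle[|\chi(S)|^2]_K\mid\chi\in\mathrm{Irr}(G)\rangle$. The degree formula $\mathrm{deg}(\Gamma)=\varphi(m)|M|/|T|$ then follows directly from Theorem~\ref{main}. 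No serious obstacle is anticipated; the argument is a direct specialization, the only subtle point being the identification $\chi(S^{-1})=\overline{\chi(S)}$ that makes the radicand $4|\chi(S)|^2$ a non-negative real so that the square root collapses to $2|\chi(S)|$.
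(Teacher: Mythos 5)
Your proposal is correct and follows essentially the same route as the paper: realize $\mathrm{BCay}(G,S)$ as $\mathrm{SC}(G,\emptyset,\emptyset,S,S^{-1})$, compute $I_1=I_3=\emptyset$ and $I_2=4*SS^{-1}$, and specialize Theorem~\ref{main} using Lemma~\ref{class-function}(3) together with $\chi(S^{-1})=\overline{\chi(S)}$ to collapse the radicand to $4|\chi(S)|^2$. Your explicit handling of the factor $4$ and of the conjugate identity is slightly more careful than the paper's one-line computation, but the argument is the same.
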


\begin{proof}
Clearly $\Gamma=\mathrm{SC}(G,\emptyset,\emptyset,S,S^{-1})$. Then $I_1=I_3=\emptyset$ and $I_2=4*SS^{-1}$. By Theorem \ref{main} and Lemma \ref{class-function}(3), the splitting field of $\Gamma$ is $K(\sqrt{d_\chi\chi(I_2)}\mid \chi\in \mathrm{Irr}(G))=K(\sqrt{d_\chi\chi(SS^{-1})}\mid \chi\in \mathrm{Irr}(G))=K(\sqrt{\chi(S)\chi(S^{-1})}\mid \chi\in \mathrm{Irr}(G))=K(|\chi(S)|\mid \chi\in \mathrm{Irr}(G))$, and the algebraic degree of $\Gamma$ is $\mathrm{deg}(\Gamma)=\frac{\varphi(m)|M|}{|T|}$.
\end{proof}

\begin{core}
Let $G$ be a finite group and $m$ be the exponent of $G$. Let $\Gamma=\mathrm{SC}(G,T_{11},T_{22},T_{12},T_{21})$ be a quasi-abelian semi-Cayley digraph with $T_{11}=T_{22}$ and $T_{12}=T_{12}^{-1}=T_{21}$. Then the splitting field of $\Gamma$ is $$\mathrm{SF}(\Gamma)=\mathbb{Q}(\chi(T_{11}),\chi(T_{12})\mid \chi\in \mathrm{Irr}(G))=K(\chi(T_{12})\mid \chi\in \mathrm{Irr}(G))$$
and the algebraic degree of $\Gamma$ is
$$\mathrm{deg}(\Gamma)=\frac{\varphi(m)|M|}{|T|},$$
where $T=\{t\in \mathbb{Z}_m^*\mid T_{11}^t=T_{11}, (T_{12}T_{12})^t=T_{12}T_{12}\}$, $K=\{x\in \mathbb{Q}(\omega_m)\mid \sigma(x)=x \ \text{for all} \ \sigma \in \eta^{-1}(T)\}$ and $M=\langle [(\chi(T_{12}))^2]_K\mid \chi\in \mathrm{Irr}(G)\rangle$.
\end{core}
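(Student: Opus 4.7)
The plan is to specialize Theorem~\ref{main} to the present setting by simplifying the multisets $I_1,I_2,I_3$ under the hypotheses $T_{11}=T_{22}$ and $T_{12}=T_{21}=T_{12}^{-1}$. First I would compute directly from \eqref{eqn:I} that $I_1=2*T_{11}$, $I_2=(2*T_{11}T_{11})\cup(4*T_{12}T_{12})$, and $I_3=2*T_{11}T_{11}$. Comparing multiplicities element by element then yields the clean signed differences
\[
I_2\setminus I_3=4*T_{12}T_{12},\qquad I_3\setminus I_2=\emptyset.
\]

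Next, I would substitute these into the description of $T$ given by Theorem~\ref{main}. Since the power operation distributes as $(k*X)^t=k*X^t$, the conditions $I_1^t=I_1$ and $(I_2\setminus I_3)^t=I_2\setminus I_3$ reduce precisely to $T_{11}^t=T_{11}$ and $(T_{12}T_{12})^t=T_{12}T_{12}$, while the condition on $I_3\setminus I_2$ is vacuous; this gives the stated form of $T$. For the character values, I would note $\chi(I_1)=2\chi(T_{11})$ and, using Lemma~\ref{class-function}(3),
\[
d_\chi\bigl(\chi(I_2\setminus I_3)-\chi(I_3\setminus I_2)\bigr)=4d_\chi\chi(T_{12}T_{12})=4(\chi(T_{12}))^2.
\]
Hence $K=\mathbb{Q}(\chi(T_{11}),(\chi(T_{12}))^2\mid\chi\in\mathrm{Irr}(G))$, and $M=\langle[4(\chi(T_{12}))^2]_K\mid\chi\rangle=\langle[(\chi(T_{12}))^2]_K\mid\chi\rangle$ since $4$ is a square in $K$.

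Finally, I would invoke the symmetry hypothesis $T_{12}=T_{12}^{-1}$: it forces $\chi(T_{12})=\chi(T_{12}^{-1})=\overline{\chi(T_{12})}$, so every $\chi(T_{12})$ is real. Consequently $\sqrt{4(\chi(T_{12}))^2}=2|\chi(T_{12})|\in\mathbb{Q}(\chi(T_{12}))$, so adjoining these square roots to $K$ coincides with adjoining the values $\chi(T_{12})$ themselves, yielding
\[
\mathrm{SF}(\Gamma)=K(\chi(T_{12})\mid\chi\in\mathrm{Irr}(G))=\mathbb{Q}(\chi(T_{11}),\chi(T_{12})\mid\chi\in\mathrm{Irr}(G)).
\]
The algebraic degree then reads off Theorem~\ref{main} as $\varphi(m)|M|/|T|$. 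The only step requiring genuine care is recognizing the reality of $\chi(T_{12})$ so that the square-root extension collapses to a character-value extension; everything else is straightforward bookkeeping with the multiset differences.
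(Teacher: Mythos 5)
Your proposal is correct and follows essentially the same route as the paper: specialize Theorem~\ref{main} by computing $I_1=2*T_{11}$, $I_2\setminus I_3=4*(T_{12}T_{12})$, $I_3\setminus I_2=\emptyset$, apply Lemma~\ref{class-function}(3) to get $d_\chi(\chi(I_2\setminus I_3)-\chi(I_3\setminus I_2))=4\chi(T_{12})^2$, and use $T_{12}=T_{12}^{-1}$ to see that $\chi(T_{12})$ is real so the square roots collapse to the values $\chi(T_{12})$ themselves. Your write-up is in fact slightly more explicit than the paper's (which leaves the verification of the form of $T$ and the reality argument implicit), but the substance is identical.
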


\begin{proof}
Note that $I_1=2*T_{11}$,  $I_2\setminus I_3=4*(T_{12}T_{12}^{-1})$ and $I_3\setminus I_2=\emptyset$. We have
$$\chi(I_2\setminus I_3)-\chi(I_3\setminus I_2)=4\chi(T_{12}T_{12}^{-1})=\frac{4|\chi(T_{12})|^2}{d_\chi}=\frac{4\chi(T_{12})^2}{d_\chi},$$
where the second equality follows from Lemma \ref{class-function}(3) and the third equality follows from the fact that $T_{12}=T_{21}^{-1}$. It follows from Theorem \ref{main} that the splitting field of $\Gamma$ is $\mathbb{Q}(\chi(T_{11}),\chi(T_{12})\mid \chi\in \mathrm{Irr}(G))=K(\chi(T_{12})\mid \chi\in \mathrm{Irr}(G))$, and the algebraic degree of $\Gamma$ is $\frac{\varphi(m)|M|}{|T|}$.
\end{proof}

%\begin{core}
%Let $G$ be a finite group and $\Gamma=\mathrm{SC}(G,T_{11},T_{22},T_{12},T_{21})$ be a quasi-abelian semi-Cayley digraph. Let $m$ be the exponent of $G$. Then the algebraic degree of $\Gamma$ satisfies
%$$\frac{\varphi(m)}{|T|}\leq \mathrm{deg}(\Gamma)\leq\frac{\varphi(m)2^{|\mathrm{Irr}(G)|}}{|T|},$$
%where $T$ is given in Theorem {\rm \ref{main}}.
%\end{core}

%\begin{proof}
%Since $M=\langle [d_\chi(\chi(I_2\setminus I_3)-\chi(I_3\setminus I_2))]_{K^{\times 2}}\mid \chi\in %\mathrm{Irr}(G)\rangle$, $1\leq |M|\leq 2^{|\mathrm{Irr}(G)|}$. Then apply Theorem \ref{main}.
%\end{proof}

\begin{core}\label{core:integral}{\rm (integral quasi-abelian semi-Cayley digraph)}
Let $G$ be a finite group and $m$ be the exponent of $G$. A quasi-abelian semi-Cayley digraph $\Gamma=\mathrm{SC}(G,T_{11},T_{22},T_{12},T_{21})$ is integral if and only if $T=\mathbb{Z}_m^*$ and $M=1$, where $T$ and $M$ are given in Theorem {\rm \ref{main}}.
\end{core}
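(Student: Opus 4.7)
The plan is to deduce this corollary directly from Theorem \ref{main} using only elementary cardinality bounds; no new machinery is needed. First I would record that $\Gamma$ is integral if and only if $\mathrm{deg}(\Gamma)=1$, a fact already noted in the introduction: the eigenvalues of the adjacency matrix are algebraic integers, so they lie in $\mathbb{Q}$ precisely when they lie in $\mathbb{Z}$. By Theorem \ref{main}, this is equivalent to the numerical equality $\varphi(m)|M|=|T|$.

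Next I would write down the two obvious inequalities $|T|\le\varphi(m)$ and $|M|\ge 1$. The first holds because $T\subseteq\mathbb{Z}_m^*$ by its very definition. The second holds because $M$ is a subgroup of the quotient group $K^\times/K^{\times 2}$ (generated by the specified classes), hence contains the identity $[1]_K$. Combining these two inequalities with $\varphi(m)|M|=|T|$ immediately forces both $|M|=1$ and $|T|=\varphi(m)$.

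To finish, I would upgrade $|T|=\varphi(m)$ to the set-theoretic equality $T=\mathbb{Z}_m^*$ by observing that $T$ is in fact a subgroup of $\mathbb{Z}_m^*$: the multiset identity $(X^t)^s=X^{ts}$ makes it routine to verify that $T$ contains $1$ and is closed under products and inverses, so a subgroup of $\mathbb{Z}_m^*$ having order $\varphi(m)$ must coincide with $\mathbb{Z}_m^*$. Conversely, if $T=\mathbb{Z}_m^*$ and $M=1$, then Theorem \ref{main} gives $\mathrm{deg}(\Gamma)=\varphi(m)\cdot 1/\varphi(m)=1$ and $\Gamma$ is integral. I do not anticipate any real obstacle here: the entire corollary is a short numerical consequence of the formula $\mathrm{deg}(\Gamma)=\varphi(m)|M|/|T|$, and the only place requiring a brief extra remark is the step promoting $|T|=\varphi(m)$ to $T=\mathbb{Z}_m^*$.
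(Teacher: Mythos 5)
Your proposal is correct and follows essentially the same route as the paper: both reduce the statement to the identity $\mathrm{deg}(\Gamma)=\varphi(m)|M|/|T|=1$ from Theorem \ref{main} and then force each factor to be trivial (the paper does this by noting $T\leq \mathbb{Z}_m^*$ so that $\varphi(m)/|T|$ is a positive integer, while you use the equivalent bound $|T|\le\varphi(m)$ together with $|M|\ge 1$). The only cosmetic difference is that you spell out why $T$ is a subgroup of $\mathbb{Z}_m^*$, a fact the paper uses without comment.
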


\begin{proof}
$\Gamma$ is an integral graph if and only if $\mathrm{deg}(\Gamma)=1$, that is to say $\frac{\varphi(m)}{|T|}|M|=1$ by Theorem \ref{main}.
Since $T$ is a subgroup of $\mathbb{Z}_m^*$ and $|\mathbb{Z}_m^*|=\varphi(m)$, $\frac{\varphi(m)}{|T|}$ is an integer, so $|\mathbb{Z}_m^*/T|=1$ and $|M|=1$. Thus $\Gamma$ is an integral graph if and only if $T=\mathbb{Z}_m^*$ and $M=1$.
\end{proof}

\subsection{Examples}\label{sec:3.3}

In this section we construct some examples to illustrate the use of Theorem \ref{main}. It is readily checked that Examples \ref{ex:1}, \ref{ex:3} and \ref{ex:4} are non-abelian semi-Cayley graphs which are non-Cayley. For $g\in G$, we use the notation $g^G:=\{h^{-1}gh\mid h\in G\}$.

% For every example we perform a consistency check that is only based on the definition of algebraic degrees of graphs to make sure our calculation is correct.

\begin{example}\label{ex:1}
Let $G=S_3$, $T_{11}=T_{12}=T_{21}=(12)^G$, $T_{22}=(123)^G$ and $\Gamma=\mathrm{SC}(G,T_{11},T_{22},T_{12},T_{21})$. Then $I_1=S_3\setminus\{1_G\}$, $I_2=[17*1_G,16*(123),16*(132)]$ and $I_3=[4*(13),4*(23),4*(12)]$. So $I_2\setminus I_3=I_2$ and $I_3\setminus I_2=I_3$. The exponent of $G$ is $m=6$ and $T=\{1,5\}=\mathbb{Z}_6^*$. The character table of $S_3$ is listed in Table {\rm \ref{table:s3}}.
\begin{table}[H]\centering
	\begin{tabular}{c|cccc}\toprule
		$g$ & $1_G$ & $(12)$ & $(123)$ \\
        $|g^G|$ & $1$ & $3$ & $2$ \\ \hline
		$\chi_1$&$1$&$1$&1 \\
		$\chi_2$&$1$&$-1$&1 \\
		$\chi_3$&$2$&$0$&$-1$ \\
		\bottomrule
	\end{tabular}\caption{Character table of $S_3$}\label{table:s3}
\end{table}
\noindent Then
\begin{center}
\begin{tabular}{llll}
    $\chi_1(I_1)=5$, & $\chi_1(I_2)=49$, & $\chi_1(I_3)=12$;\\
	$\chi_2(I_1)=1$, & $\chi_2(I_2)=49$, & $\chi_2(I_3)=-12$;\\
	$\chi_3(I_1)=-2$, & $\chi_3(I_2)=2$, & $\chi_3(I_3)=0$.
\end{tabular}
\end{center}
Thus $K=\mathbb{Q}(\chi(I_1),\chi(I_2\setminus I_3)-\chi(I_3\setminus I_2)\mid \chi\in \mathrm{Irr}(G))=\mathbb{Q}$, and so by Theorem {\rm \ref{main}}, the splitting field of $\Gamma$ is $K(\sqrt{d_\chi(\chi(I_2\setminus I_3)-\chi(I_3\setminus I_2))}\mid \chi\in \mathrm{Irr}(G))=\mathbb{Q}(\sqrt{37},\sqrt{61},\sqrt{4})=\mathbb{Q}(\sqrt{37},\sqrt{61})$. Since $M=\langle [37]_\mathbb{Q},[61]_\mathbb{Q},[4]_\mathbb{Q}\rangle=\langle [37]_\mathbb{Q},[61]_\mathbb{Q}\rangle$, we have $\deg(\Gamma)=\frac{\varphi(6)|M|}{|T|}=|M|=4$.

%Now we make a double check. Using GAP {\rm \cite{GAP4}}, we have the distinct eigenvalues of $\Gamma$ are $0$, $-1$, $\frac{-1\pm \sqrt{37}}{2}$ and $\frac{-1\pm \sqrt{61}}{2}$. This shows that the splitting field of $\Gamma$ is $\mathbb{Q}(\sqrt{37},\sqrt{61})$ and so $\deg(\Gamma)=[\mathbb{Q}(\sqrt{37},\sqrt{61}):\mathbb{Q}]=4$ as desired.
\end{example}

\begin{example}\label{ex:3}
Let $G=\langle a,b\rangle\cong A_4$, where $a=(12)(34)$ and $b=(123)$. Let $T_{11}=T_{22}=b^G$, $T_{21}=\{1_G\}$, $T_{12}=(b^{-1})^G$ and $\Gamma=\mathrm{SC}(G,T_{11},T_{22},T_{12},T_{21})$. Then $I_1=[2*(243),2*(123),2*(134),2*(142)]$, $I_2=[12*(234),12*(124),12*(132),12*(143)]$ and  $I_3=[8*(234),8*(124),8*(132),8*(143)]$. So $I_2\setminus I_3=[4*(234),4*(124),4*(132),4*(143)]$ and $I_3\setminus I_2=\emptyset$. The exponent of $G$ is $m=6$ and $T=\{1_G\}$. The character table of $A_4$ is listed in Table {\rm \ref{table:a4}}, where $\omega=\exp(\frac{2\pi\mathrm{i}}{3})=-\frac{1}{2}+\mathrm{i}\frac{\sqrt{3}}{2}$.
\begin{table}[H]\centering
	\begin{tabular}{c|cccc}\toprule
		$g$&$1_G$&$(12)(34)$&$(123)$& $(132)$\\
        $|g^G|$& 1 & 3 & 4 & 4 \\ \hline
		$\chi_1$&$1$&$1$&1&1\\
		$\chi_2$&$1$&$1$&$\omega$&$\omega^2$\\
		$\chi_3$&$1$&$1$&$\omega^2$&$\omega$\\
		$\chi_4$&$3$&$-1$&0&0\\
		\bottomrule
	\end{tabular}\caption{Character table of $A_4$}\label{table:a4}
\end{table}
\noindent Then
\begin{center}
\begin{tabular}{llll}
	$\chi_1(I_1)=8$, & $\chi_1(I_2\setminus I_3)=16$;\\
	$\chi_2(I_1)=8\omega$, & $\chi_2(I_2\setminus I_3)=16\omega^2$;\\
	$\chi_3(I_1)=8\omega^2$,& $\chi_3(I_2\setminus I_3)=16\omega$;\\
	$\chi_4(I_1)=0$, & $\chi_4(I_2\setminus I_3)=0$.
\end{tabular}
\end{center}
This yields that $K=\mathbb{Q}(\omega)$, $\mathrm{SF}(\Gamma)=K$ and $M=1$. Hence $\deg(\Gamma)=2$.

%Using GAP, we have the distinct eigenvalues of $\Gamma$ are $0$, $2$, $6$, $3(-1\pm \sqrt{-3})$ and $-1\pm\sqrt{-3}$. Hence $\deg(\Gamma)=[\mathbb{Q}(\sqrt{-3}):\mathbb{Q}]=2$ as desired.
\end{example}

\begin{example}\label{ex:4}
Let $n>1$ be an odd integer and $G=D_{2n}=\langle a,b\mid a^n=b^2=(ab)^2=1\rangle$. Let $T_{11}=\langle a\rangle\setminus\{1\}$, $T_{22}=b\langle a\rangle$, $T_{12}=T_{21}=\{1\}$, and $\Gamma=\mathrm{SC}(G,T_{11},T_{22},T_{12},T_{21})$. Then $I_1=\{a^i, b,ba^i\mid 1\leq i\leq n-1\}$, $I_2=\{(2n+3)*1,n*a^i\mid 1\leq i\leq n-1\}$ and $I_3=\{2(n-1)*b,2(n-1)*ba^i\mid 1\leq i\leq n-1\}$.
Since $I_2$ and $I_3$ are disjoint, $I_2\setminus I_3=I_2$ and $I_3\setminus I_2=I_3$.  The exponent of $G$ is $m=n$ and $T=\mathbb{Z}_n^*$. The character table of $G$ is listed in Table {\rm \ref{table:2}}.
\begin{table}[H]\centering
	\begin{tabular}{c|cc}\toprule
		&$a^k~(0\leq k\leq n-1)$&$ba^k~(0\leq k\leq n-1)$\\ \hline
		$\chi_1$&$1$&$1$\\
		$\chi_2$&$1$&$-1$\\
		$\chi_l~(3\leq l\leq 2+\frac{n-1}{2})$&$2\cos(\frac{2(l-2)k\pi}{n})$&$0$\\
		\bottomrule
	\end{tabular}\caption{Character table of $D_{2n}$ with odd $n$}\label{table:2}
\end{table}
\noindent Then
\begin{center}
\begin{tabular}{llll}
$\chi_1(I_1)=2n-1$,&$\chi_1(I_2)=2n^2-2n+5$,&$\chi_1(I_3)=2n(n-1)$;\\
$\chi_2(I_1)=-1$,&$\chi_2(I_2)=2n^2-2n+5$,&$\chi_2(I_3)=-2n(n-1)$,\\
\end{tabular}
\end{center}
and for $3\leq l\leq 2+\frac{n-1}{2}$,
\begin{center}
\begin{tabular}{llll}
	$\chi_l(I_1)=\sum\limits_{k=1}^{n-1}2\cos(\frac{2(l-2)k\pi}{n})=-2$,\\
	$\chi_l(I_2)=2(2n+3)+(2n-2)\sum\limits_{k=1}^{n-1}2\cos(\frac{2(l-2)k\pi}{n})=10$,\\
    $\chi_l(I_3)=0$.
\end{tabular}
\end{center}
This shows that $K=\mathbb{Q}$, $\mathrm{SF}(\Gamma)=\mathbb{Q}(\sqrt{5},\sqrt{4n^2-4n+5},\sqrt{20})=\mathbb{Q}(\sqrt{5},\sqrt{4n^2-4n+5})$ and $M=\langle [5]_{\mathbb{Q}},[4n^2-4n+5]_{\mathbb{Q}},[20]_{\mathbb{Q}}\rangle=\langle [5]_{\mathbb{Q}},[4n^2-4n+5]_{\mathbb{Q}}\rangle$.
Thus $\mathrm{deg}(\Gamma)=|M|$. Take $n=5$ for example. We have $\mathrm{SF}(\Gamma)=\mathbb{Q}(\sqrt{5},\sqrt{85})$ and $\mathrm{deg}(\Gamma)=|M|=|\langle [5]_{\mathbb{Q}},[85]_{\mathbb{Q}}\rangle|=4$.

% Using GAP, we have the distinct eigenvalues of $\Gamma$ are $\frac{9\pm \sqrt{5}}{2}$, $\frac{-1\pm \sqrt{85}}{2}$ and $\frac{-1\pm \sqrt{5}}{2}$. Hence $\deg(\Gamma)=[\mathbb{Q}(\sqrt{5},\sqrt{85}):\mathbb{Q}]=4$ as desired
\end{example}

\section{Concluding remarks}\label{sec:conluding}

Focusing on quasi-abelian semi-Cayley digraphs, this paper initializes the study of determining splitting fields and algebraic degrees of semi-Cayley digraphs over non-abelian groups (see Theorem \ref{main}). Corollary \ref{core:integral} gives a sufficient and necessary condition for a  quasi-abelian semi-Cayley digraph to be integral, but classifying all integral quasi-abelian semi-Cayley digraphs is still a challenging problem. Integral quasi-abelian semi-Cayley graphs were recently proposed in \cite{wat} as a model for quantum spin networks that permit a quantum phenomenon called perfect state transfer. This led us to study the algebraic degrees of quasi-abelian semi-Cayley graphs.

It is clear that a subset of a group $G$ is conjugate-closed if and only if every inner automorphism of $G$ fixes it setwise. So we encourage the reader to study the following question. Let $S$ be a subset of $G$ which is fixed setwise by any automorphism of $G$. Determine the splitting field and algebraic degree of $\mathrm{Cay}(G,S)$. More generally, let $T_{ij}\subseteq G$, $i,j\in\{0,1\}$, each of which is fixd setwise by any automorphism of $G$. Determine the splitting field and algebraic degree of $\mathrm{SC}(G,T_{11},T_{22},T_{12},T_{21})$.

\section*{Acknowledgments}

Research of this paper was carried out while the second author was visiting Beijing Jiaotong University. He expresses his sincere thanks to the 111 Project of China (grant number B16002) for financial support and to the School of Mathematics and Statistics of Beijing Jiaotong University for their kind hospitality.

\end{document}